\newtheorem{thm}{Theorem}[section]
\newtheorem{cor}[thm]{Corollary}
\newtheorem{prop}[thm]{Proposition}
\theoremstyle{definition}
\newtheorem{defn}{Definition}[section]
\theoremstyle{definition}
\newtheorem{rem}{Remark}[section]
\numberwithin{equation}{section}
\newcommand{\punt}{\boldsymbol{.}}
\newcommand{\K}{\mathfrak{K}}
\newcommand{\LL}{\mathfrak{L}}
\newcommand{\kcon}{{\scriptscriptstyle(\gamma)}}
\newcommand{\ab}{\bar{\alpha}_{\scriptscriptstyle D}^{\scriptscriptstyle <-1>}}
\newcommand{\dip}{{{{}_{\scriptscriptstyle D}}^{\scriptscriptstyle <-1>}}_{\scriptscriptstyle P}}
\newcommand{\free}{\boxplus}
\newcommand{\park}{\mathrm{park}}
\begin{document}
\title{Cumulants and convolutions via Abel polynomials}
\author{E. Di Nardo, P. Petrullo, D. Senato}
\date{\today}
\maketitle
\begin{center}
\textsf{Dipartimento di Matematica e Informatica, Universit\`a
degli Studi della Basilicata, via dell'Ateneo Lucano
10, 85100 Potenza, Italia}.\\
\goodbreak
\small\verb"elvira.dinardo@unibas.it, p.petrullo@gmail.com,"\\
\verb"domenico.senato@unibas.it"
\end{center}
\begin{abstract}
We provide an unifying polynomial expression giving moments in terms
of cumulants, and viceversa, holding in the classical, boolean and free
setting. This is done by using a symbolic treatment of Abel
polynomials. As a by-product, we show that in the free cumulant theory
the volume polynomial of Pitman and Stanley plays the role of
the complete Bell exponential polynomial in the classical theory. Moreover
via generalized Abel polynomials we construct a new class of
cumulants, including the classical, boolean and free ones, and the
convolutions linearized by them. Finally, via an umbral Fourier
transform, we state a explicit connection between boolean and
free convolution.
\end{abstract}
\textsf{\textbf{keywords}: umbral calculus, free cumulant, boolean
cumulant, classical cumulant, volume polynomial, Abel polynomials.}\\\\
\textsf{\textsf{AMS subject classification}: 60C05, 06A11, 05A40}\\
%
%
\section{Introduction}
Cumulants linearize the convolution of probability measures in the
three universal probability theories: classical, boolean and free.
The last is a noncommutative probability theory introduced by
Voiculescu~\cite{Voi3} with a view to tackle certain problems in
operator algebra. More precisely, a new kind of independence is
defined by replacing tensor products with free products and this
can help understand the Von Neumann algebras of free groups.
The combinatorics underlying this subject is based on the notion of
noncrossing partition, whose first systematical study is due to
Kreweras~\cite{Krew} and Poupard~\cite{Pou}. Within free
probability, noncrossing partitions are extensively used  by
Speicher~\cite{NicSpe1}. Speicher takes his lead from the
definition of classical multilinear cumulants in terms of the
M\"obius function. However, he changes the lattice where the
M\"obius inversion formula is applied. Instead of using the
lattice of all partitions of a finite set, he uses the smaller
lattice of noncrossing partitions.  Such a new family of cumulants,
known as free cumulants, turns out to be the semi-invariants of Voiculescu,
originally introduced via the $R$-transform. Biane~\cite{Bia} has shown
how free cumulants can be used to obtain asymptotical estimations of the
characters of large symmetric groups.

As is well known, some results of noncrossing partition theory can be
recovered via Lagrange inversion formula. Recently, a simple
expression of Lagrange inversion formula has been given by Di
Nardo and Senato~\cite{DiNaSen1} within classical umbral calculus.
This paper arises from this new Lagrange symbolic formula.

The classical umbral calculus~\cite{RotTay} is a renewed version
of the celebrated umbral calculus of Roman and
Rota~\cite{Roman-Rota}. It consists of a symbolic technique to
deal with sequences of numbers, indexed by nonnegative integers,
where the subscripts are treated as powers. Recently Di Nardo and
Senato~\cite{DiNaSen1, DiNaSen2} have developed this umbral
language in view of probabilistic applications. Moreover the
umbral syntax has been fruitfully used in computational
k-statistics and their generalizations~\cite{bernoulli}. The first
algebraic approach to this topic was given by McCullagh~\cite{McC}
and Speed~\cite{Speed}. Applications to bilinear generating
functions for polynomial sequences are given by Gessel~\cite{Ges}.

Rota and Shen~\cite{Rota-Shen} have already used umbral methods in
exploring some algebraic properties of cumulants, only in the
classical theory. They have proved that the umbral handling of
cumulants encodes and simplifies their combinatorics properties.
In this paper, we go further showing how the umbral syntax allows
us to explore the more hidden connection between the theory of
free cumulants and that of classical and boolean cumulants.

As pointed out in~\cite{NSTV}, the recent results of Belinschi and
Nica ~\cite{Belinschi} revealed a deeper connection between free
and boolean convolution that deserves a further clarification.
Indeed, this connection cannot be encoded in a straight way in the
formal power series language. We provide this connection via an
umbral Fourier transform. Moreover, quite surprisingly, the umbral
methods bring to the light that the key to manage all these
families of cumulants is the connection between binomial sequences
and Abel polynomials~\cite{RST}. This connection gives the chance
to find a new and very simple parametrization of free cumulants in
terms of moments. If $\alpha$ is the umbra representing the
moments and ${\mathfrak{K}}_{\scriptscriptstyle \alpha}$ is the
umbra representing the free cumulants, then
${\bar{\mathfrak{K}}}_{\scriptscriptstyle \alpha}^n \simeq
\bar{\alpha} ( \bar{\alpha} - n.\bar{\alpha})^{n-1}.$ This
parametrization closely parallels the one connecting cumulants and
moments, either in the classical or in the boolean setting, which
are respectively $\kappa_ {\scriptscriptstyle\alpha}^n
\simeq\alpha(\alpha-1\punt\alpha)^{n-1}$ and
$\bar{\eta}_{\scriptscriptstyle\alpha}^n \simeq
\bar{\alpha}(\bar{\alpha}-2 \punt \bar{\alpha})^{n-1},$ where
$\kappa_ {\scriptscriptstyle\alpha}^n$ denotes the $n$-th
classical cumulant and $\bar{\eta}_{\scriptscriptstyle\alpha}^n$
the $n$-th boolean cumulant.

The inverse expression giving moments in terms of free cumulants is
obtained (up to a sign) simply by swapping the umbra representing
moments with the umbra representing its free cumulants,
$\bar{\alpha}^n \simeq {\bar{\mathfrak{K}}}_{\scriptscriptstyle \alpha} (
{\bar{\mathfrak{K}}}_{\scriptscriptstyle \alpha} + n.
{\bar{\mathfrak{K}}}_{\scriptscriptstyle \alpha})^{n-1}.$  It is remarkable that the
polynomial, on the right side of the previous expression,
looks like the volume polynomials of Pitman and Stanley~\cite{PitStan} obtained when
the indeterminates are replaced by scalars, $V_n(a,a,\ldots,a)=a ( a + n \, a)^{n-1}.$ So
we prove that moments of an umbra can be recovered from volume
polynomials of Pitman and Stanley~\cite{PitStan} when the
indeterminates are replaced with the uncorrelated and similar free
cumulant umbrae. In other words, in the free cumulant theory the volume
polynomials are the analogs of the complete Bell exponential
polynomials in the classical cumulant theory.

The paper is structured as follows. In Section~2, we recall
the combinatorics of classical, boolean and free cumulants
with the aim to demonstrate how the umbral syntax provides an unifying
framework to deal with these number sequences. Indeed, in Section~3,
after recalling the umbral syntax, a theorem embedding the algebras of
multiplicative functions on the posets of all partitions and of
all interval partitions of a finite set in the classical umbral
calculus is proved. In this section we also recall the umbral
theory of classical cumulants  and we show how the umbral theory
of boolean cumulants is easily deduced from the classical one by
introducing the boolean unity umbra. A symbolic theory of free
cumulants closes the section. We also show that
Catalan numbers are the moments of the unique umbra whose free
cumulants are all equal to 1. In Section 4, we state the
connection  between volume polynomials and free cumulants.
In the last section we introduce a new class
of cumulants, including the classical, boolean
and free ones, and the convolutions linearized by them.
%
\section{Cumulants and convolutions}
The combinatorics of classical, free and boolean cumulants were
studied by Lehner \cite{Lehn1,Lehn2}, Speicher~\cite{Spe2}, and
Speicher and Wouroudi~\cite{SpeWou}. In the following we recall
the main results of their approach.

Denote by $[n]$ the set of positive integers $\{1,2,\ldots,n\}$
and by $\Pi_n$ the set of all partitions of $[n]$. The algebra of
the multiplicative functions on the poset $(\Pi_n,\leq)$ (see~\cite{DRS} or~\cite{Stan3}), where $\leq$ is the \emph{refinement}
order, provides nice formulae when the coefficients of the
exponential formal power series $f[g(t)-1]$ are expressed in terms
of the coefficients of $f(t)$ and $g(t)$, where
$f(t)=1+\sum_{n\geq 1}f_n\frac{t^n}{n!},$ and $g(t)=1+\sum_{n
\geq 1}g_n\frac{t^n}{n!}.$
For example, the coefficients of $\log f(t),$ expanded in an
exponential power series, are known as \emph{formal cumulants} of
$f(t).$ When $f(t)$ is the moment generating function of a random
variable $X,$ this sequence has a meaning which is not purely
formal. For example, cumulants of order $2,3$ and $4$ concur in
characterizing the variance, the skewness and the curtosis of a
random variable.
\par
Let us recall some well known facts on
multiplicative functions. We denote the minimum and the maximum of
the poset $\Pi_n$ by $\mathbf{0}_n$ and $\mathbf{1}_n$
respectively. The number of blocks of a given $\pi\in\Pi_n$ will be denoted by $\ell(\pi)$.

If $\sigma,\pi\in\Pi_n$ and
$[\sigma,\pi]=\{\tau\in\Pi_n\,|\,\sigma\leq\tau\leq\pi\}$, then
there is an unique sequence of non-negative integers
$(k_1,k_2,\ldots,k_n)$ with $k_1 + 2 k_2+ \cdots + n k_n = \ell(\sigma)$
and $k_1 + k_2 + \cdots + k_n = \ell(\pi)$ such that
\begin{equation}\label{id:dec int}
[\sigma,\pi]\cong\Pi_1^{k_1}\times\Pi_2^{k_2}\times\cdots\times\Pi_n^
{k_n}.
\end{equation}
In particular, if $\pi$ has exactly $m_i$ blocks of cardinality $i$, then
\begin{equation}
[\mathbf{0}_n,\pi]\cong\Pi_1^{m_1}\times\Pi_2^{m_2}
\times\cdots\times\Pi_n^{m_n}.
\label{(0d)}
\end{equation}
Then $[\pi,\mathbf{1}_n]\cong\Pi_{\ell(\pi)}.$
The sequence $(k_1,k_2,\ldots,k_n)$ is called the \emph{type} of
the interval $[\sigma,\pi],$ where
\begin{equation}\label{def:k_i}
k_i=\text{number of blocks of $\pi$ that are the union of $i$ blocks
of $\sigma$}.
\end{equation}
The vector $sh(\pi)=(c_1,c_2,\ldots,c_l)$ whose entries are the cardinalities
of the blocks of $\pi$, arranged in nondecreasing order, will be
called the \textit{shape} of $\pi$.
\par
A function ${\mathfrak f}:\Pi_n\times\Pi_n\rightarrow\mathbb{C}$ is said to
be \emph{multiplicative} if ${\mathfrak f}(\sigma,\pi)=f_1^{k_1}f_2^{k_2}\cdots f_n^{k_n},$
whenever (\ref{id:dec int}) holds and
$f_n := {\mathfrak f}(\mathbf{0}_n,\mathbf{1} _n)$. The
M\"obius function ${\mathfrak{\mu}}$, the Zeta function ${\mathfrak
\zeta}$ and the
Delta function ${\mathfrak \delta}$ are multiplicative functions with
$\mu_n=(-1)^{n-1}(n-1)!$, $\zeta_n=1$, and $\delta_n=\delta_{1,n}$
(the Kronecker delta).
\par
A convolution $\star$ is defined between two multiplicative
functions ${\mathfrak f}$ and ${\mathfrak g}$. We have
\begin{equation}\label{def:conv}
({\mathfrak f} \star {\mathfrak g})(\sigma,\pi)
:= \sum_{\sigma\leq\tau\leq\pi} {\mathfrak f}(\sigma,\tau) \, {\mathfrak g}
(\tau,\pi).
\end{equation}
The function ${\mathfrak f} \star {\mathfrak g}$ is also multiplicative. In
particular, if
${\mathfrak h}={\mathfrak f}\star {\mathfrak g}$, then $h_n=({\mathfrak f}\star
{\mathfrak g})(\mathbf{0}_n, \mathbf{1}_n)$ so
that
\begin{equation}\label{id:h=f star g}
h_n=\sum_{\tau\in\Pi_n}f_{\tau}\,g_{\ell(\tau)},
\end{equation}
where $f_{\tau}:= f_1^{m_1}f_2^{m_2}\cdots f_n^{m_n}$ and $m_i$ is
the number of blocks of $\tau$ of cardinality $i$. The function
$\delta$ is the identity with respect to the convolution $\star$.
Furthermore, $\mu$ and $\zeta$ are inverse each other with respect
to $\star$, that is $\mu\star\zeta=\zeta\star\mu=\delta$.
\begin{thm}\label{thm1}
Let ${\mathfrak f}$, ${\mathfrak g}$ and ${\mathfrak h}$ be three
multiplicative functions on the
lattice $(\Pi_n,\leq)$ with $f_n={\mathfrak f}(\mathbf{0}_n,\mathbf{1}_n)$,
$g_n={\mathfrak g}(\mathbf{0}_n,\mathbf{1}_n)$ and
$h_n={\mathfrak h}(\mathbf{0}_n,\mathbf{1}_n)$. If $f(t) =
1+\sum_{n\geq 1}f_n\frac{t^n}{n!},$ $g(t) = 1+\sum_{n\geq
1}g_n\frac{t^n}{n!}$ and $h(t) = 1+\sum_{n\geq
1}h_n\frac{t^n}{n!},$ then
\begin{equation}\label{id:mf vs star}
h(t)=f[g(t)-1]\Longleftrightarrow {\mathfrak h}={\mathfrak g}\star {\mathfrak
f}.
\end{equation}
\end{thm}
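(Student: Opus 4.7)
The plan is to reduce the equivalence to the combinatorial formula~(\ref{id:h=f star g}) that has already been established for multiplicative functions. Since multiplicative functions are determined by the scalar sequence $(\mathfrak{f}(\mathbf{0}_n,\mathbf{1}_n))_{n\geq 1}$, it suffices to show that the coefficient extraction $h_n = n!\,[t^n]\,f[g(t)-1]$ agrees with the convolution value $(\mathfrak{g}\star\mathfrak{f})(\mathbf{0}_n,\mathbf{1}_n)$.

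First, I would unpack the right-hand side. Applying (\ref{def:conv}) with $\sigma=\mathbf{0}_n$ and $\pi=\mathbf{1}_n$ gives
\begin{equation*}
(\mathfrak{g}\star\mathfrak{f})(\mathbf{0}_n,\mathbf{1}_n)=\sum_{\tau\in\Pi_n}\mathfrak{g}(\mathbf{0}_n,\tau)\,\mathfrak{f}(\tau,\mathbf{1}_n).
\end{equation*}
The isomorphism (\ref{(0d)}) together with multiplicativity of $\mathfrak{g}$ yields $\mathfrak{g}(\mathbf{0}_n,\tau)=g_{\tau}=g_1^{m_1}g_2^{m_2}\cdots g_n^{m_n}$, where $m_i$ is the number of blocks of $\tau$ of size $i$. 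The identification $[\tau,\mathbf{1}_n]\cong\Pi_{\ell(\tau)}$ and multiplicativity of $\mathfrak{f}$ give $\mathfrak{f}(\tau,\mathbf{1}_n)=f_{\ell(\tau)}$. Hence $(\mathfrak{g}\star\mathfrak{f})(\mathbf{0}_n,\mathbf{1}_n)=\sum_{\tau\in\Pi_n}g_{\tau}f_{\ell(\tau)}$, which is exactly the ``swapped'' version of (\ref{id:h=f star g}).

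Next, I would extract the same quantity from the composed exponential generating function. Writing
\begin{equation*}
f[g(t)-1]=1+\sum_{k\geq 1}\frac{f_k}{k!}\bigl(g(t)-1\bigr)^k
\end{equation*}
and expanding the $k$-th power as a sum over ordered compositions $(n_1,\ldots,n_k)$ of $n$ with positive parts, the coefficient of $t^n/n!$ in $f[g(t)-1]$ equals
\begin{equation*}
\sum_{k=1}^{n}f_k\sum_{\substack{n_1+\cdots+n_k=n\\ n_i\geq 1}}\frac{1}{k!}\binom{n}{n_1,\ldots,n_k}g_{n_1}\cdots g_{n_k}.
\end{equation*}
Grouping ordered tuples by the unordered multiset of their parts, each multiset with $m_i$ occurrences of $i$ accounts for $k!/\prod_i m_i!$ ordered tuples, and the coefficient collapses into a sum over set partitions using the classical identity that the number of partitions of $[n]$ with block-size profile $(m_1,m_2,\ldots)$ equals $n!/\bigl(\prod_i (i!)^{m_i}m_i!\bigr)$. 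This recovers precisely $\sum_{\tau\in\Pi_n}f_{\ell(\tau)}g_{\tau}$.

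The main obstacle is this last bookkeeping step, matching the symmetry factor $\prod_i m_i!$ that arises from quotienting ordered compositions by reorderings with the one implicit in counting set partitions by block-size type. Once this standard exponential-formula accounting is in place, both expressions for $h_n$ coincide, and since the equivalence has been verified term by term for every $n$, the biconditional in (\ref{id:mf vs star}) follows.
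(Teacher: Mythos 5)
Your argument is correct. The paper itself gives no proof of this theorem: it is recalled as the classical compositional formula for multiplicative functions on the partition lattice, with a pointer to Doubilet--Rota--Stanley, and the only related computation in the paper is the umbral reformulation in Theorem~\ref{thm fund 1}, which instead reads the identity off the moment expansion \eqref{def:comp mom} of the composition umbra, where $\mathrm{d}_{\lambda}$ counts set partitions of shape $\lambda$. Your direct verification is the standard one and it does close: evaluating $\mathfrak{g}\star\mathfrak{f}$ at $(\mathbf{0}_n,\mathbf{1}_n)$ via \eqref{def:conv}, \eqref{(0d)} and $[\tau,\mathbf{1}_n]\cong\Pi_{\ell(\tau)}$ correctly yields $\sum_{\tau\in\Pi_n}g_{\tau}f_{\ell(\tau)}$ (the order of the factors in \eqref{id:h=f star g} is indeed swapped, as you note, and this matters since $\star$ is noncommutative), and on the generating-function side the factor $f_k/k!$ times the $k!/\prod_i m(\lambda)_i!$ orderings of a composition of type $\lambda$ times the multinomial $n!/\prod_i(i!)^{m(\lambda)_i}$ reproduces exactly $n!/\bigl(\prod_i(i!)^{m(\lambda)_i}m(\lambda)_i!\bigr)$, the number of set partitions of type $\lambda$; so the ``obstacle'' you flag is only routine cancellation. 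Two small points worth making explicit: the biconditional requires the remark, stated earlier in the paper, that $\mathfrak{g}\star\mathfrak{f}$ is itself multiplicative, so that agreement of the scalars $h_n$ with $(\mathfrak{g}\star\mathfrak{f})(\mathbf{0}_n,\mathbf{1}_n)$ for all $n$ really forces $\mathfrak{h}=\mathfrak{g}\star\mathfrak{f}$ on every interval; and one should note that a single equation $h(t)=f[g(t)-1]$ encodes the statement simultaneously for all $n$, matching the convention that $\mathfrak{f},\mathfrak{g},\mathfrak{h}$ are defined coherently on every $\Pi_n$.
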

The formulae expressing cumulants $c_n$ in terms of moments
$m_n$, and viceversa,  are easily recovered from (\ref{id:h=f star g}).
Indeed, let $F(t)=1+\sum_{n\geq 1}m_n\frac{t^n}{n!}$  and $C(t)
= \log F(t) = 1+\sum_{n\geq 1} c_n\frac{t^n}{n!}$  its cumulant generating
function.
If ${\mathfrak m}$ and ${\mathfrak k}$ denote two multiplicative functions on
$(\Pi_n,\leq)$ such that ${\mathfrak m}(\mathbf{0}_n,\mathbf{1}_n)=m_n$ and ${\mathfrak k}
(\mathbf{0}_n,\mathbf{1}_n)=c_n$, then from Theorem~\ref{thm1} we have
\begin{equation}\label{id:M vs C}
{\setlength\arraycolsep{2pt}\left\{\begin{array}{lll} {\mathfrak
k}&=& {\mathfrak m} \star\mu,
\\ {\mathfrak m}& = & {\mathfrak k} \star\zeta.
\end{array}\right.}
\end{equation}
Free cumulants occur in noncommutative context of
probability theory (see for instance~\cite{Voi1}). A
\textit{noncommutative probability space} is a pair
$(\mathcal{A},\varphi)$, where $\mathcal{A}$ is a unital
noncommutative algebra and
$\varphi:\mathcal{A}\longrightarrow\mathbb{C}$ is a unital linear
functional. An element $X$ of $\mathcal{A}$ is called
\textit{noncommutative  random variable}. The $n$-th
\textit{moment} of $X$ is the complex number $m_n=\varphi(X^n)$,
the \textit{distribution} of $X$ is the collection of its moments
$(\varphi(X),\varphi(X^2),\varphi(X^3),\ldots)$. The moment
generating function of $X$ is the formal power series
\begin{equation}\label{def:free mom gen fun}
M(t)=1+\sum_{n\geq 1}m_n t^n.
\end{equation}
The \textit{noncrossing  (or \textit{free}) cumulants} of
$X$ are the coefficients $r_n$ of the ordinary power series $R(t)
=1+r_1t+r_2t^2+\ldots$ such that
\begin{equation}\label{def:free cum gen fun}
M(t)=R[tM(t)].
\end{equation}
This relation between cumulants and moments of a noncommutative
random variable has been found by Speicher~\cite{Spe1} and
characterize the free cumulants introduced by
Voiculescu~\cite{Voi1}, so we assume \eqref{def:free cum gen fun}
as a definition of free cumulants (see also~\cite{NicSpe}
and~\cite{Spe2}). Moreover, Speicher~\cite{Spe1} has shown that an
identity analogous to (\ref{id:M vs C}) holds between free
cumulants $\{r_n\}_{n\geq 1}$ and moments $\{m_n\}_{n\geq 1}$ of a
(noncommutative) random variable $X$, if we change the lattice of
partitions of a set into the lattice of noncrossing partitions
$(\mathcal{NC}_n,\leq)$.
\par
A \textit{noncrossing partition} $\pi=\{B_1,B_2,\ldots,B_s\}$ of
the set $[n]$ is a partition such that if $1\leq h<l<k<m\leq n$,
with $h,k\in B_j$, and $l,m\in B_{j\,'}$, then $j=j\,'$ (see~\cite{Krew},~\cite{Sim} and~\cite{Pou} for a detailed handling).
Let $\mathcal{NC}_n$ denote the set of all noncrossing
partitions of $[n]$. Its cardinality is equal to
the $n$-th Catalan number $\mathcal{C}_n.$
The convolution $\ast$ defined on
the multiplicative functions on the the lattice $(\mathcal{NC}_n,\leq)$ is given by
\begin{equation}\label{def:conv'}
({\mathfrak f} \ast {\mathfrak g})(\sigma,\pi)
:= \sum_{\scriptscriptstyle{\sigma\leq\tau\leq\pi \atop
\tau\in\mathcal{NC}_n}}{\mathfrak f}(\sigma,\tau) \, {\mathfrak g}(\tau,\pi).
\end{equation}
Following Nica and Speicher~\cite{NicSpe}, if $\tilde{\tau}$ is the Kreweras complement of a noncrossing partition $\tau$, then ${\mathfrak f}(\mathbf{0}_n,\tau)=\mathfrak{f}_{\tau}$ and ${\mathfrak g}(\tau,\mathbf{1}_n)=\mathfrak{g}_{\tilde{\tau}}$. Hence,  if ${\mathfrak h}={\mathfrak f} \ast {\mathfrak g}$ then
\begin{equation}\label{id:h=f ast g}
h_n=\sum_{\tau\in\mathcal{NC}_n}f_{\tau}\, {\mathfrak g}_{\tilde{\tau}}.
\end{equation}
If we denote by $\zeta_{\scriptscriptstyle\mathcal{NC}}$ and $\mu_{\scriptscriptstyle\mathcal{NC}}$ the Zeta
function and the M\"obius function on the noncrossing partition lattice respectively, then we have ${\mathfrak h} = {\mathfrak f} \ast
\zeta_{\scriptscriptstyle\mathcal{NC}}$ if and only if ${\mathfrak f} =
{\mathfrak h} \ast\mu_{\scriptscriptstyle\mathcal{NC}}$.
\begin{thm}[Speicher~\cite{Spe1}]\label{thm2}
Let $X$ be a noncommutative random variable with moment generating
function $M(t)$ and free cumulant generating function $R(t)$ as in
(\ref{def:free mom gen fun}) and (\ref{def:free cum gen fun}). If
${\mathfrak m}$ and ${\mathfrak r}$ are two multiplicative functions on the
lattice
$(\mathcal{NC}_n,\leq)$ such that
${\mathfrak m}(\mathbf{0}_n,\mathbf{1}_n)=m_n$ and
${\mathfrak r}(\mathbf{0}_n,\mathbf{1}_n)=r_n$, then
\begin{displaymath}
{\setlength\arraycolsep{2pt}\left\{\begin{array}{lll}{\mathfrak
r}& = &{\mathfrak m} \ast\mu_
{\scriptscriptstyle\mathcal{NC}},\\
{\mathfrak m} &= &{\mathfrak r}
\ast\zeta_{\scriptscriptstyle\mathcal{NC}}.\end {array}\right.}
\end{displaymath}
\end{thm}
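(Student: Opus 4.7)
The plan is to show one of the two identities, say $\mathfrak{m} = \mathfrak{r} \ast \zeta_{\scriptscriptstyle\mathcal{NC}}$; the other then follows immediately, since by the remark preceding the theorem $\zeta_{\scriptscriptstyle\mathcal{NC}}$ and $\mu_{\scriptscriptstyle\mathcal{NC}}$ are inverses under $\ast$. Unpacking the chosen identity via (\ref{id:h=f ast g}) reduces the task to the combinatorial formula
\begin{equation*}
m_n \;=\; \sum_{\tau\in\mathcal{NC}_n} r_{\tau},
\end{equation*}
where $r_{\tau}=r_1^{m_1}r_2^{m_2}\cdots r_n^{m_n}$ and $m_i$ is the number of blocks of $\tau$ of cardinality $i$. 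I will establish this formula by induction on $n$, proving it to be equivalent to the functional equation $M(t)=R[tM(t)]$.

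For the inductive step, I would partition $\mathcal{NC}_n$ according to the block $B$ containing the element $1$. If $B=\{1=j_1<j_2<\cdots<j_k\}$, the noncrossing property forces every other block to be contained in one of the gap intervals $I_i=\{j_i+1,\ldots,j_{i+1}-1\}$ (with $j_{k+1}:=n+1$), and conversely any choice of noncrossing partitions $\sigma_i$ of these intervals recombines with $B$ to yield a noncrossing partition of $[n]$. Writing $n_i=|I_i|$, so that $n_1+\cdots+n_k = n-k$, this bijection delivers
\begin{equation*}
\sum_{\tau\in\mathcal{NC}_n} r_{\tau}
\;=\;\sum_{k\ge 1}r_k\!\!\sum_{n_1+\cdots+n_k=n-k}\;\prod_{i=1}^{k}\Bigl(\sum_{\sigma_i\in\mathcal{NC}_{n_i}} r_{\sigma_i}\Bigr),
\end{equation*}
with the convention that the empty sum (for $n_i=0$) equals $1$. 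By inductive hypothesis the inner parentheses equal $m_{n_i}$, and so the right-hand side becomes $\sum_{k\ge 1} r_k \sum_{n_1+\cdots+n_k=n-k} m_{n_1}\cdots m_{n_k}$.

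It remains to identify this expression with $m_n$. Extracting the coefficient of $t^n$ in $M(t)=R[tM(t)]=\sum_{k\ge 1} r_k (tM(t))^k$ yields precisely
\begin{equation*}
m_n \;=\; \sum_{k\ge 1} r_k \, [t^{n-k}]\, M(t)^k \;=\; \sum_{k\ge 1} r_k \!\!\sum_{n_1+\cdots+n_k=n-k}\! m_{n_1}\cdots m_{n_k},
\end{equation*}
closing the induction (the base $n=1$ giving $m_1=r_1$ is immediate). The main technical point is the bijective step: one has to check carefully that in a noncrossing partition the block through $1$ really does split the remaining elements into independent intervals, with no block straddling two distinct gaps; once this is granted the proof reduces to bookkeeping compatible with the Catalan-type recursion encoded by $M(t)=R[tM(t)]$.
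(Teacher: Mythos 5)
Your argument is correct. Note that the paper itself offers no proof of this statement --- it is quoted as Speicher's theorem with a citation --- so there is nothing to compare against except the standard literature; what you have written is essentially Speicher's original inductive argument. The reduction is sound: since $\zeta_n=1$ for all $n$, the factor $(\zeta_{\scriptscriptstyle\mathcal{NC}})_{\tilde\tau}$ in \eqref{id:h=f ast g} equals $1$, so $\mathfrak{m}=\mathfrak{r}\ast\zeta_{\scriptscriptstyle\mathcal{NC}}$ is exactly $m_n=\sum_{\tau\in\mathcal{NC}_n}r_\tau$, and the companion identity $\mathfrak{r}=\mathfrak{m}\ast\mu_{\scriptscriptstyle\mathcal{NC}}$ does follow from the inverse relation between $\zeta_{\scriptscriptstyle\mathcal{NC}}$ and $\mu_{\scriptscriptstyle\mathcal{NC}}$ stated just before the theorem. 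The bijective step you flag as the technical core is fine: if a block $B'$ met two distinct gaps $I_i$ and $I_{i'}$ ($i<i'$) of $B=\{j_1<\cdots<j_k\}$, then choosing $h=j_i$, $k=j_{i+1}$ from $B$ and $l\in B'\cap I_i$, $m\in B'\cap I_{i'}$ gives $h<l<k<m$, violating the noncrossing condition unless $B'=B$; conversely the recombination is noncrossing because distinct gaps are order-separated and no element of $B$ lies strictly inside a gap. The enumeration of blocks $B$ containing $1$ of size $k$ by compositions $n_1+\cdots+n_k=n-k$ (via $n_i=j_{i+1}-j_i-1$, $j_{k+1}=n+1$) is also right, and matching against the coefficient of $t^n$ in $M(t)=R[tM(t)]$ closes the induction. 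The only cosmetic slip is writing $R[tM(t)]=\sum_{k\ge1}r_k(tM(t))^k$ without the constant term $1$ of $R$; it is harmless since you only extract coefficients of $t^n$ with $n\ge1$.
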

The notion of \textit{boolean cumulants} arises from considering
the \textit{boolean convolution} of probability measures~\cite{SpeWou}. Within stochastic differential
equations, this family of cumulants is also known as \lq\lq
partial cumulants\rq\rq.  The boolean cumulants of $X$ are the coefficients $h_n$
of the ordinary delta series $H(t)=h_1t+h_2t^2+\ldots$ such that
\begin{equation}\label{def:bool cum}
M(t)=\frac{1}{1-H(t)},
\end{equation}
where $M(t)$ is the same as (\ref{def:free mom
gen fun}). From a combinatorial point of view, the formulae involving moments
and boolean cumulants are recovered by defining a convolution
$\diamond$ on the multiplicative functions on the lattice of interval partitions (see~\cite{VW}).
This lattice turns out to be isomorphic to the boolean lattice of a
$n$-set, from which the name of boolean convolution has been derived. A partition
$\pi$ of $\Pi_n$ is said to be an \textit{interval partition} if
each block $B_i$ of $\pi$ is an interval $[a_i,b_i]$ of $[n]$,
that is $B_i=[a_i,b_i]=\{x\in [n]\,|\,a_i\leq x\leq b_i\},$
where $a_i,b_i\in [n]$. We denote by $\mathcal{I}_n$ the subset of
$\Pi_n$ of all the interval partitions. The pair
$(\mathcal{I}_n,\leq)$, where $\leq$ is the refinement order, is a
lattice. The type of each interval $[\sigma,\tau]$ in
$\mathcal{I}_n$ is the same as (\ref{def:k_i}). Let ${\mathfrak f}$ and
${\mathfrak g}$ be two multiplicative functions on the interval
partition lattice. We define the convolution ${\mathfrak h} =
{\mathfrak f} \diamond {\mathfrak g}$, which is also
multiplicative, by
\begin{equation}\label{def:conv''}
({\mathfrak f} \diamond {\mathfrak g})(\sigma,\pi)
:= \sum_{\scriptscriptstyle{\sigma\leq\tau\leq\pi \atop
\tau\in\mathcal{I}_n}} {\mathfrak f}(\sigma,\tau) {\mathfrak g}(\tau,\pi).
\end{equation}
So if ${\mathfrak h} = {\mathfrak f}\diamond {\mathfrak g},$ then
\begin{equation}\label{id:h=f diamond g}
h_n=\sum_{\tau\in\mathcal{I}_n}f_{\tau}\, g_{\ell(\tau)}.
\end{equation}
Given the power series $H(t)$ and $M(t)$ in (\ref{def:bool cum}),
if ${\mathfrak h}$ and ${\mathfrak m}$ are two multiplicative
functions on the lattice of interval partitions, with ${\mathfrak
m}(\mathbf{0}_n,\mathbf{1}_n)=m_n$ and ${\mathfrak
h}(\mathbf{0}_n,\mathbf{1}_n)=h_n$, then we have
\begin{equation}\label{id:M vs H}
{\setlength\arraycolsep{2pt}\left\{\begin{array}{lll}{\mathfrak
h}& =& {\mathfrak m} \diamond\mu_
{\scriptscriptstyle\mathcal{I}},\\
{\mathfrak m} &=& {\mathfrak h}
\diamond\zeta_{\scriptscriptstyle\mathcal{I}},\end {array}\right.}
\end{equation}
where $\mu_{\scriptscriptstyle\mathcal{I}}$ and
$\zeta_{\scriptscriptstyle\mathcal{I}}$ are the M\"obius function
and the zeta function on $(\mathcal{I}_n,\leq)$.
\begin{thm}\label{thm3}
Let ${\mathfrak f}$, ${\mathfrak g}$ and ${\mathfrak h}$ be three
multiplicative functions on the lattice $(\mathcal{I}_n,\leq)$ with
$f_n={\mathfrak f}(\mathbf{0}_n,\mathbf{1}_n)$,
$g_n={\mathfrak g}(\mathbf{0}_n,\mathbf{1}_n)$ and
$h_n={\mathfrak h}(\mathbf{0}_n,\mathbf{1}_n)$. If $f(t) = 1+\sum_{n\geq 1}f_n t^n,$
$g(t)= 1+\sum_{n\geq 1}g_n t^n$ and $h(t)= 1+\sum_{n\geq 1}h_n t^n,$ then
\begin{equation}\label{id:mf vs gf3}
h(t)=f[g(t)-1]\Longleftrightarrow {\mathfrak h} = {\mathfrak g} \diamond
{\mathfrak f}.
\end{equation}
\end{thm}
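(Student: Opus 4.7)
The plan is to prove the two sides of the equivalence by computing the coefficient $[t^n]\,f[g(t)-1]$ on one hand and expanding $\mathfrak{h}=\mathfrak{g}\diamond\mathfrak{f}$ via \eqref{id:h=f diamond g} on the other, and observing that both yield the same sum indexed by compositions of $n$. The decisive combinatorial fact is that an interval partition of $[n]$ is completely determined by the ordered sequence of cardinalities of its blocks, so that $\mathcal{I}_n$ is in natural bijection with the set of compositions of $n$.

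First, by \eqref{id:h=f diamond g} applied to $\mathfrak{h}=\mathfrak{g}\diamond\mathfrak{f}$, we have
\begin{equation*}
h_n=\sum_{\tau\in\mathcal{I}_n}g_\tau\,f_{\ell(\tau)}.
\end{equation*}
Using the bijection above, any $\tau\in\mathcal{I}_n$ with $\ell$ blocks corresponds to a composition $(c_1,c_2,\ldots,c_\ell)$ of $n$, and $g_\tau=g_{c_1}g_{c_2}\cdots g_{c_\ell}$, $\ell(\tau)=\ell$. Therefore
\begin{equation*}
h_n=\sum_{\ell\geq 1}f_\ell\sum_{\substack{c_1,\ldots,c_\ell\geq 1\\ c_1+\cdots+c_\ell=n}}g_{c_1}g_{c_2}\cdots g_{c_\ell}.
\end{equation*}

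On the other hand, since $g(t)-1=\sum_{k\geq 1}g_k t^k$, a direct expansion of the ordinary power $(g(t)-1)^\ell$ gives
\begin{equation*}
[t^n](g(t)-1)^\ell=\sum_{\substack{c_1,\ldots,c_\ell\geq 1\\ c_1+\cdots+c_\ell=n}}g_{c_1}g_{c_2}\cdots g_{c_\ell},
\end{equation*}
so that $[t^n]\,f[g(t)-1]=\sum_{\ell\geq 1}f_\ell\,[t^n](g(t)-1)^\ell$ coincides with the expression for $h_n$ above. Comparing coefficients, $h(t)=f[g(t)-1]$ if and only if $h_n=\sum_{\tau\in\mathcal{I}_n}g_\tau f_{\ell(\tau)}$ for every $n\geq 1$, which is precisely $\mathfrak{h}=\mathfrak{g}\diamond\mathfrak{f}$.

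There is no serious obstacle here; the only point worth emphasizing is the structural parallel with Theorem~\ref{thm1}. In the $\Pi_n$ setting each shape $(c_1,\ldots,c_\ell)$ is realized by $\binom{n}{c_1,\ldots,c_\ell}/\prod m_i!$ partitions, which is exactly the weight supplied by the exponential generating function; in the $\mathcal{I}_n$ setting each composition is realized by a single interval partition, which is exactly the weight supplied by the ordinary generating function. This is why the theorem has the same external form as Theorem~\ref{thm1} but with ordinary power series replacing exponential ones.
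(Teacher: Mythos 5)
Your proof is correct. Note that the paper itself states Theorem~\ref{thm3} without proof (it is recalled background, on a par with Theorem~\ref{thm1}), so there is no in-paper argument to match; the closest thing is the proof of the umbral analogue, Theorem~\ref{thm fund 2}, which establishes the same identity by grouping interval partitions according to their shape $\lambda$ and using the count $\ell(\lambda)!/m(\lambda)!$ of interval partitions of shape $\lambda$. Your route — identifying $\mathcal{I}_n$ with compositions of $n$ and matching the resulting sum against the direct expansion of $[t^n](g(t)-1)^{\ell}$ — is equivalent (the $\ell(\lambda)!/m(\lambda)!$ interval partitions of shape $\lambda$ are exactly the compositions obtained by rearranging the parts of $\lambda$), and arguably cleaner, since the composition indexing is precisely what the ordinary power $(g(t)-1)^{\ell}$ produces with no multinomial bookkeeping. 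The only step you leave implicit is that equality of the values $h_n=(\mathfrak{g}\diamond\mathfrak{f})(\mathbf{0}_n,\mathbf{1}_n)$ for all $n$ suffices for equality of the multiplicative functions, but this is how the paper itself uses \eqref{id:h=f diamond g}, so it is an acceptable convention rather than a gap. Your closing remark on why exponential weights appear for $\Pi_n$ and ordinary weights for $\mathcal{I}_n$ is accurate and captures exactly the point the paper makes after the theorem.
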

Theorems~\ref{thm1} and \ref{thm3} state that the convolutions
$\star$ and $\diamond$ express the composition of exponential
power series and ordinary power series respectively. So these
convolutions are noncommutative. This is not true for the convolution \eqref{def:conv'}. In fact, the map $\tau\rightarrow\tilde{\tau}$ is an order-reversing bijection such that $sh(\tilde{\tilde{\tau}})=sh(\tau)$, and by virtue of \eqref{id:h=f ast g} we obtain $\mathfrak{f}\ast \mathfrak{g}=\mathfrak{g} \ast \mathfrak{f}$ (see~\cite{NicSpe} for more details).
%
\section{Symbolic methods for classical, boolean and free cumulants}
\indent
We start this section recalling the necessary tools of the umbral syntax;
main references are~\cite{DiNaSen1,DiNaSen2,DiNaSen3} and~\cite{RotTay}.
\par
Let us denote by $X$ the set of variables $X = \{x_1, x_2, \ldots, x_n \}.$
A classical umbral calculus consists of the following data: a set $A=\{\alpha,\beta,\ldots\}$,
called the \textit{alphabet}, whose elements are named \textit{umbrae};
a linear functional $E$, called the \textit{evaluation}, defined on the
polynomial ring $R[X][A]$ and taking value in $R[X]$ (a ring whose
quotient field is of characteristic zero), such that $E[1]=1$ and
$E[x_1^s x_2^m \cdots x_n^t \alpha^i\beta^j\cdots\gamma^k]=x_1^s x_2^m \cdots x_n^t E[\alpha^i]E[\beta^j]\cdots
E[\gamma^k]$ (\textit{uncorrelation} property) for all nonnegative integers $s,m, t,i,j,k$;
two special umbrae $\varepsilon$ (\textit{augmentation}) and
$u$ (\textit{unity}) such that $E[\varepsilon^i]=\delta_{0,i},$
and $E[u^i]=1,$ for $i=0,1,2,\ldots.$
\par
A sequence $(1,a_1,a_2,\ldots)$ of elements of $R$ is
\textit{represented} by a scalar umbra $\alpha$ if $E[\alpha^i]=a_i,$ for
$i=0,1,2,\ldots$. In this case we say that $a_i$ is the $i$-th
moment of $\alpha$. In the following the powers of an
umbra $\alpha$ will be also called moments, if this does not give
rise to misunderstandings. A sequence $(1,p_1,p_2,\ldots)$ of elements of $R[X],$
such that $p_n$ is of degree $n$ for all $n,$ is
\textit{represented} by a polynomial umbra $\psi$ if $E[\psi^i]=p_i$ for
$i=0,1,2,\ldots$.
\par
The factorial moments of a scalar umbra $\alpha$ are the elements $a_{(n)}
\in R$ such that $a_{(0)}=1$ and $a_{(n)} = E[
(\alpha)_n]= E[\alpha (\alpha - 1) (\alpha - 2) \cdots (\alpha - n +1)]$ for all $n \geq 1.$
The polynomial $(\alpha)_n$ is an umbral polynomial. More general,
an umbral polynomial is a polynomial $p \in R[X][A].$
The support of $p$ is the set of all umbrae occurring in $p.$ If $p$ and
$q$ are two umbral polynomials, then $p$ and $q$ are uncorrelated
if and only if their supports are disjoint. Moreover the polynomials
$p$ and $q$ are \textit{umbrally equivalent} if and only if $E[p] = E[q],$
in symbols $p \simeq q.$

Two umbrae are \textit{similar}, in symbols $\alpha\equiv\gamma$,
if and only if $E[\alpha^n] = E[\gamma^n]$ for all $n$.
So, each sequence is represented by infinite many uncorrelated (i.e. distinct) umbrae.
In the following, we shall denote by $\alpha,\alpha^{\prime},\alpha^{\prime
\prime},\ldots$ a family of similar and uncorrelated umbrae. We extend the alphabet $A$ with
the so-called {\it auxiliary umbrae} obtained via operations among
similar umbrae. This leads to the construction of a {\it saturated
umbral calculus} in which auxiliary umbrae are treated as elements
of a suitable alphabet. For example, the symbol $n
\punt \alpha$ denotes an auxiliary umbra similar to the sum of $n$
distinct umbrae, each one similar to the umbra $\alpha,$ that is
$n \punt \alpha \equiv  \alpha^{\prime} +\alpha^{\prime \prime}+
\cdots + \alpha^ {\prime \prime \prime}.$ We remark that
$n\punt(\alpha+\gamma)\equiv n\punt\alpha+n\punt\gamma$, and
$\alpha\punt n\equiv n\alpha$, for every umbrae $\alpha$ and
$\gamma$ and for all nonnegative $n$.
\par
The \textit{generating function} $f(\alpha,t)$ of $\alpha$ is
$f(\alpha,t)= 1+\sum_{n\geq 1}a_n\frac{t^n}{n!}.$
A formal construction is given in~\cite{DiNaSen1}. In particular,
we have $f(\varepsilon,t)=1, f(u,t)=e^t$ and $f(n \punt \alpha,t) = f(\alpha,t)^n$.

Special umbrae are the \textit{Bell umbra} $\beta$ and the \textit{singleton umbra}
$\chi.$ The Bell umbra $\beta$ has moments
given by the Bell numbers so that $f(\beta,t)=e^{e^t-1}.$
The singleton umbra $\chi$ has moments $E[\chi^n]=1,$ if $n=0,1$,
and $E[\chi^n]=0$ otherwise, so that $f(\chi,t)=1+t.$
The derivative umbra $\alpha_{\scriptscriptstyle D}$ of an umbra
$\alpha$ is the umbra whose moments are $(\alpha_{\scriptscriptstyle D})^n
\simeq \partial_{\alpha} \alpha^n \simeq n \alpha^{n-1}$ for $n=1,2,\ldots$~\cite{DiNaSen3}. We have $f(\alpha_{\scriptscriptstyle D},t)=1 +t \, f(\alpha,t)$,
and in particular $E[\alpha_{\scriptscriptstyle D}]=1$.
\par
Given an umbra $\alpha$, the umbra denoted by $-1\punt\alpha$ is
uniquely determined (up to similarity) by the condition
$\alpha+(-1\punt\alpha)\equiv\varepsilon.$
The umbra $-1\punt\alpha$ is said to be the {\it inverse} of
$\alpha$. Its generating function is $f(\alpha,t)^{-1}.$ Then, the
umbra $-n \punt \alpha$ is the inverse of $n \punt \alpha$ and
$f(-n \punt \alpha) = f(\alpha,t)^{-n}.$
\par
A generalization of the auxiliary umbra
$n\punt\alpha$ (\textit{dot operation}) is introduced when $n$ is
replaced by an umbra $\gamma$. We denote by $\gamma\punt\alpha$ an auxiliary umbra with
moments
\begin{equation}\label{def:dot mom}
(\gamma\punt\alpha)^n\simeq\sum_{\lambda \vdash n}
\mathrm{d}_{\lambda}\,(\gamma)_{\ell(\lambda)}
\,(\alpha')^{\lambda_1}(\alpha'') ^{\lambda_2}
\cdots(\alpha''')^{\lambda_{\ell(\lambda)}},
\end{equation}
where the sum ranges over all the partitions $\lambda=
(\lambda_1,\lambda_2,\ldots, \lambda_l)$ of $n,$ where $l=\ell(\lambda)$ is the number of positive parts of $\lambda$, and
$\mathrm{d}_{\lambda}={n\choose \lambda}/[m(\lambda)_1!m(\lambda)_2! \cdots m(\lambda)_n!]$,
where $m(\lambda)_i$ denotes the number of parts of $\lambda$ equal to $i$.
From now on, we denote
$(\alpha')^{\lambda_1}(\alpha'')^{\lambda_2}
\cdots(\alpha''')^{\lambda_{\ell(\lambda)}}$ and $m(\lambda)_1!$
$m(\lambda)_2!$ $\cdots$ $m(\lambda)_n!$ by $\alpha_{\lambda}$ and
$m(\lambda)$! respectively. The generating function of
$\gamma\punt\alpha$ is $f(\gamma\punt\alpha,t)=f[\gamma,\log f(\alpha,t)].$
In particular, we have $\alpha\punt u\equiv u\punt\alpha\equiv\alpha$ for all $\alpha$ in
$A$, and $\chi\punt\beta\equiv\beta\punt\chi\equiv u$.
\par
The composition of $f(\gamma,t)$ and $f(\alpha,t)$ is the
generating function of $\gamma\punt\beta\punt\alpha,$
$f(\gamma\punt\beta\punt\alpha,t)=f[\gamma,f(\alpha,t)-1].$
The umbra $\gamma\punt\beta\punt\alpha$ is said to be the
\textit{composition umbra} of $\gamma$ and $\alpha$.
 The moments of $\gamma\punt\beta\punt\alpha$ are
\begin{equation}
(\gamma\punt\beta\punt\alpha)^n \simeq \sum_{\lambda \vdash n}
\mathrm{d}_{\lambda}\,\gamma^{\ell(\lambda)}\,
\alpha_{\lambda}.
\label{def:comp mom}
\end{equation}
In particular $(\gamma \punt \beta) \punt \alpha  \equiv \gamma \punt (\beta \punt
\alpha)$  and
\begin{equation}
\gamma_{\scriptscriptstyle D} \punt \beta \punt \alpha_
{\scriptscriptstyle D} \equiv (\alpha + \gamma \punt \beta \punt
\alpha_{\scriptscriptstyle D})_ {\scriptscriptstyle D}.
\label{(deriv)}
\end{equation}
Finally the symbol $\alpha^{\scriptscriptstyle<-1>}$ denotes an umbra
whose generating function is the compositional inverse $f^{\scriptscriptstyle<-1>}
(\alpha,t)$ of $f(\alpha,t).$ Such an umbra is uniquely
determined (up to similarity) by the relations
$\alpha\punt\beta\punt\alpha^{\scriptscriptstyle<-1>}\equiv\alpha^
{\scriptscriptstyle<-1>} \punt\beta\punt\alpha\equiv\chi.$
\begin{thm}\label{thm fund 1}
Let ${\mathfrak f}$, ${\mathfrak g}$ and ${\mathfrak h}$ be three
multiplicative functions on the
lattice $(\Pi_n,\leq)$. If $\alpha$, $\gamma$ and $\omega$ are
three umbrae with moments $\alpha^n\simeq f_n,$ $\gamma^n\simeq
g_n,$ and $\omega^n\simeq h_n,$ then we have
${\mathfrak h}={\mathfrak f}\star {\mathfrak g} \Longleftrightarrow
\omega\equiv\gamma\punt\beta\punt\alpha.$
\end{thm}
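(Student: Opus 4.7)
The plan is to reduce the statement to Theorem~\ref{thm1} by passing through exponential generating functions, and then to recognize the required composition of series as precisely the generating function of a composition umbra.

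More precisely, I would first introduce the exponential generating functions
\[
f(t)=1+\sum_{n\geq 1}f_n\frac{t^n}{n!},\quad g(t)=1+\sum_{n\geq 1}g_n\frac{t^n}{n!},\quad h(t)=1+\sum_{n\geq 1}h_n\frac{t^n}{n!},
\]
which by hypothesis coincide respectively with the umbral generating functions $f(\alpha,t)$, $f(\gamma,t)$ and $f(\omega,t)$. Applying Theorem~\ref{thm1} with the roles of $\mathfrak{f}$ and $\mathfrak{g}$ interchanged gives the equivalence $\mathfrak{h}=\mathfrak{f}\star\mathfrak{g} \Longleftrightarrow h(t)=g[f(t)-1]$.

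Next, I would invoke the formula $f(\gamma\punt\beta\punt\alpha,t)=f[\gamma,f(\alpha,t)-1]$ recalled in the preceding umbral discussion. Substituting the identifications above, this says that the generating function of $\gamma\punt\beta\punt\alpha$ is exactly $g[f(t)-1]$. Since two scalar umbrae are similar if and only if they have the same generating function (equivalently the same sequence of moments), we obtain $\omega\equiv\gamma\punt\beta\punt\alpha$ if and only if $h(t)=g[f(t)-1]$. Chaining the two equivalences yields the statement.

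I do not anticipate a real obstacle: the whole content is a translation between Theorem~\ref{thm1} and the definition of the composition umbra. The only point requiring a little care is the bookkeeping of the order of composition, since the convolution $\star$ is noncommutative and the umbral composition $\gamma\punt\beta\punt\alpha$ assigns distinct roles to the \emph{inner} umbra $\alpha$ and the \emph{outer} umbra $\gamma$. Matching Theorem~\ref{thm1}, in which $h(t)=f[g(t)-1]$ corresponds to $\mathfrak{h}=\mathfrak{g}\star\mathfrak{f}$, against the umbral identity $f(\gamma\punt\beta\punt\alpha,t)=f[\gamma,f(\alpha,t)-1]$ forces precisely the order $\mathfrak{h}=\mathfrak{f}\star\mathfrak{g}$ on the combinatorial side, confirming the stated correspondence.
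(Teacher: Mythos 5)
Your argument is correct, but it takes a different route from the paper's. The paper proves the theorem at the level of coefficients: starting from the moment formula \eqref{def:comp mom} for the composition umbra, it observes that $\mathrm{d}_{\lambda}$ counts the set partitions of $[n]$ of shape $\lambda$, rewrites $(\gamma\punt\beta\punt\alpha)^n$ as $\sum_{\pi\in\Pi_n}\gamma^{\ell(\pi)}\alpha_{\pi}$, and matches this term by term with the convolution formula \eqref{id:h=f star g}. You instead work at the level of generating functions, invoking Theorem~\ref{thm1} (with $\mathfrak f$ and $\mathfrak g$ interchanged) together with the composition rule $f(\gamma\punt\beta\punt\alpha,t)=f[\gamma,f(\alpha,t)-1]$ and the fact that similarity of umbrae is equivalent to equality of generating functions. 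Both arguments are sound and your order bookkeeping is right (it is consistent, e.g., with the paper's subsequent identification $\kappa_{\scriptscriptstyle\alpha}\equiv(\chi\punt\chi)\punt\beta\punt\alpha$ as the umbral form of $\mathfrak k=\mathfrak m\star\mu$). The trade-off: your proof is shorter and avoids re-deriving the count of set partitions of a given shape, but it treats Theorem~\ref{thm1} and the umbral composition formula as black boxes, whereas the paper's coefficientwise comparison makes the combinatorial content of the correspondence --- the passage from integer partitions weighted by $\mathrm{d}_{\lambda}$ to set partitions --- explicit, which is the point the authors want to showcase.
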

\begin{proof}
Note that in the equivalence (\ref{def:comp mom}), $\mathrm{d}_{\lambda}$ counts the number
of partitions of $\Pi_n$ of shape $\lambda$ so that
$(\gamma\punt\beta\punt\alpha)^n \simeq \sum_{\pi\in\Pi_n}\gamma^{\ell
(\pi)}\alpha_{\pi},$ where $\pi=\{B_1,B_2,\ldots,B_l\}\in\Pi_n$ and
we set $\alpha_{\pi}=(\alpha')^{|B_1|}(\alpha')^{|B_2|} \cdots(\alpha''')^{|B_l|}.$
The result follows by comparing this last equivalence
with (\ref{id:h=f star g}).
\end{proof}
\begin{rem} Theorem~\ref{thm fund 1} states that multiplicative functions
on ${(\Pi_n,\leq)}$ can be thought as umbrae, and the convolution $\star$ of two
multiplicative functions corresponds to a composition umbra. The umbra
$\chi\punt\chi$ is the umbral counterpart of the M\"obius function
$\mu$. In fact, $f(\chi\punt\chi,t)=1+\log(1+t)$ so that
$(\chi\punt\chi)^n \simeq (-1)^{n-1}(n-1)!=\mu_n.$
In addition, the umbral counterparts of the Zeta function
$\zeta$ and the Delta function $\delta$ are respectively the unity
umbra $u$ and the singleton umbra $\chi$. Hence the relations among
multiplicative functions can be interpreted in the umbral syntax.
For example, we have $\delta=
\mu\star\zeta = \zeta\star\mu$ similarly to
$\chi\equiv(\chi\punt\chi)\punt\beta\punt u\equiv
u\punt\beta\punt(\chi\punt\chi)$. Furthermore, an umbra $\alpha$ has a
compositional inverse $\alpha^{\scriptscriptstyle<-1>}$ if and only
if $E[\alpha] = a_1 \neq 0$. In analogy, a multiplicative function
${\mathfrak f}$ has an inverse respect to the convolution $\star$ if and only
if $f_1\neq 0$.
\end{rem}
\subsection{Classical cumulants}
Classical cumulants have been studied via the classical umbral calculus in~\cite{DiNaSen1}.
Here we state a new theorem concerning a parametrization of classical cumulants
and moments. This parametrization represents the trait d'union with the umbral theory
of boolean and free cumulants, that we introduce later on.

For each umbra $\alpha$, the $\alpha$-\textit{cumulant umbra} is an umbra, denoted by
$\kappa_{\scriptscriptstyle\alpha}$, similar to $\chi\punt\alpha$.
In particular we have $\kappa_{\scriptscriptstyle\alpha}
\equiv(\chi\punt\chi)\punt\beta\punt\alpha,$ and from Theorem~\ref{thm fund 1}, this similarity is the umbral version
of (\ref{id:M vs C}), if we assume $F(t)=f(\alpha,t)$ and
$C(t)=f(\kappa_{\scriptscriptstyle\alpha},t).$ The formulae expressing the cumulants
$\kappa_{\scriptscriptstyle\alpha}^n\simeq c_n$ in terms of their
moments $\alpha^n\simeq m_n$ are easily recovered from
(\ref{def:comp mom}):
\begin{equation}\label{id:cum vs mom}
c_n=\sum_{\lambda\vdash
n}\mathrm{d}_{\lambda}(-1)^{\ell(\lambda)-1}(\ell(\lambda)-1)! \, m_{\lambda},
\end{equation}
where $m_{\lambda}=E[\alpha_{\lambda}]$, so that
$m_{\lambda}=m_{\lambda_1}m_{\lambda_2}\cdots m_{\lambda_{\ell(\lambda)}}$.
The relation $\kappa_{\scriptscriptstyle\alpha}\equiv\chi\punt\alpha$
is inverted by $\alpha\equiv\beta\punt \kappa_{\scriptscriptstyle\alpha}$
by which we have $m_n=\sum_{\lambda\vdash n}\mathrm{d}_{\lambda}c_{\lambda}.$
In particular $m_n=Y_n(c_1,c_2,\ldots,c_n),$ where $Y_n$ is the complete Bell
exponential polynomial. The Bell umbra $\beta$ is the unique umbra, up to
similarity, having the sequence of cumulants $\{1\}_{n\geq 1}$, being
$\kappa_{\beta}\equiv\chi\punt\beta\equiv u$. Moreover, we have
$\beta^n\simeq\mathcal{B}_n=|\Pi_n|$.
\par
Compared with moments, cumulants are special sequences because of their properties
of additivity and homogeneity. The following theorem states these properties in
umbral terms. Recall that the \textit{disjoint sum} of the umbrae $\alpha$ and
$\gamma$ is  an auxiliary umbra such that $(\alpha \stackrel{\punt}{+} \gamma)^n\simeq\alpha^n+\gamma^n.$
\begin{thm}\label{thm:cum umb}
For all umbrae $\alpha,\gamma\in A$ and for all $c\in R,$ the following
properties hold:
$$\begin{array}{llll}
\kappa_{\scriptscriptstyle\alpha+\gamma} & \equiv &
\kappa_{\scriptscriptstyle\alpha} \stackrel{\punt}{+} \kappa_
{\scriptscriptstyle\gamma} \quad & \hbox{(\textit{additivity property})}; \\
\kappa_{\scriptscriptstyle\alpha+cu} & \equiv & \kappa_{\scriptscriptstyle\alpha} \stackrel{\punt}{+} c \chi,
& \hbox{(\textit{semi-invariance for translation property})}; \\
\kappa_{\scriptscriptstyle c\alpha} & \equiv & c
\kappa_{\scriptscriptstyle\alpha}.  & \hbox{(\textit{homogeneity property})}.
\end{array}$$
\end{thm}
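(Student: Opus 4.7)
My plan is to reduce each of the three similarities to a one-line identity between generating functions. The starting point is the definition $\kappa_{\scriptscriptstyle\alpha}\equiv\chi\punt\alpha$: combined with the composition rule $f(\gamma\punt\alpha,t)=f[\gamma,\log f(\alpha,t)]$ and $f(\chi,s)=1+s$, it yields the key formula
$$f(\kappa_{\scriptscriptstyle\alpha},t)=1+\log f(\alpha,t).$$
Since a similarity $\alpha_1\equiv\alpha_2$ is equivalent to $f(\alpha_1,t)=f(\alpha_2,t)$, each claim becomes a short calculation once I have on hand the following standard dictionary, which I would verify quickly at the start: (i) $f(\alpha+\gamma,t)=f(\alpha,t)f(\gamma,t)$ for the sum of uncorrelated umbrae (binomial expansion plus uncorrelation); (ii) $f(c\alpha,t)=f(\alpha,ct)$ since $(c\alpha)^n=c^n\alpha^n$; (iii) $f(cu,t)=e^{ct}$; (iv) $f(c\chi,t)=1+ct$, because $(c\chi)^n\simeq 0$ for $n\geq 2$; (v) $f(\mu\stackrel{\punt}{+}\nu,t)=f(\mu,t)+f(\nu,t)-1$, obtained by summing $(\mu\stackrel{\punt}{+}\nu)^n\simeq\mu^n+\nu^n$.

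\textbf{Execution of the three cases.} For \emph{additivity}, (i) gives $f(\kappa_{\scriptscriptstyle\alpha+\gamma},t)=1+\log f(\alpha,t)+\log f(\gamma,t)$, and by (v) this is exactly $f(\kappa_{\scriptscriptstyle\alpha}\stackrel{\punt}{+}\kappa_{\scriptscriptstyle\gamma},t)$. For \emph{semi-invariance}, (i) together with (iii) gives $f(\kappa_{\scriptscriptstyle\alpha+cu},t)=1+\log f(\alpha,t)+ct$, which by (iv) and (v) coincides with $f(\kappa_{\scriptscriptstyle\alpha}\stackrel{\punt}{+}c\chi,t)$. For \emph{homogeneity}, (ii) yields
$$f(\kappa_{\scriptscriptstyle c\alpha},t)=1+\log f(\alpha,ct)=f(\kappa_{\scriptscriptstyle\alpha},ct)=f(c\kappa_{\scriptscriptstyle\alpha},t),$$
where the last equality is another application of (ii), now to $\kappa_{\scriptscriptstyle\alpha}$.

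\textbf{Main obstacle.} There is no substantial obstacle; each property is a one-line manipulation once the generating function dictionary is set. The only point that requires care is the interpretation of the symbol ``$+$'' in $\alpha+\gamma$: one must confirm that it denotes the ordinary sum of \emph{uncorrelated} umbrae, so that (i) applies and one really recovers the classical additivity of cumulants on independent sums. This is the convention adopted in the umbral formalism recalled earlier in the paper, and once it is pinned down the argument is entirely routine.
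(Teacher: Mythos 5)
Your proposal is correct. Note that the paper itself states this theorem without proof (the umbral theory of classical cumulants is deferred to the Di Nardo--Senato references), so there is no in-paper argument to match against; your generating-function computation is a complete and valid substitute. Each of the five dictionary items you list is right, the key identity $f(\kappa_{\scriptscriptstyle\alpha},t)=1+\log f(\alpha,t)$ follows immediately from $\kappa_{\scriptscriptstyle\alpha}\equiv\chi\punt\alpha$ and $f(\chi,s)=1+s$, and since an umbra's generating function encodes its entire moment sequence, equality of generating functions is indeed equivalent to similarity, so the reduction is legitimate. Your caveat about ``$+$'' is also well placed: distinct umbrae of the alphabet are uncorrelated by the definition of $E$, which is exactly what makes $f(\alpha+\gamma,t)=f(\alpha,t)f(\gamma,t)$ and hence the additivity claim work. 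The only stylistic difference from the paper's general methodology is that the authors tend to argue at the level of similarities rather than generating functions --- e.g.\ additivity would be obtained from the distributive rule $\chi\punt(\alpha+\gamma)\equiv\chi\punt\alpha\stackrel{\punt}{+}\chi\punt\gamma$ for the singleton umbra, as they do for the boolean analogue in Theorem 3.12 --- but the two routes are equivalent and yours is, if anything, more self-contained.
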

\begin{thm} [Parametrization] \label{par1} If $\kappa_
{\scriptscriptstyle\alpha}$ is the $\alpha$-cumulant umbra, then
\begin{equation}
\alpha^n\simeq\kappa_{\scriptscriptstyle\alpha}(\kappa_
{\scriptscriptstyle\alpha}+ \alpha)^{n-1} \quad \hbox{and} \quad \kappa_
{\scriptscriptstyle\alpha}^n
\simeq\alpha(\alpha-1\punt\alpha)^{n-1}.\label{id:class param 1}
\end{equation}
\end{thm}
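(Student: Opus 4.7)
\textbf{(Proof plan.)} My plan is to derive both identities from the generating-function relation between $\alpha$ and $\kappa_{\scriptscriptstyle\alpha}$. Starting from $\kappa_{\scriptscriptstyle\alpha}\equiv\chi\punt\alpha$, equivalently $\alpha\equiv\beta\punt\kappa_{\scriptscriptstyle\alpha}$, the composition formulae recalled in the previous subsection immediately give $f(\alpha,t)=\exp\bigl(f(\kappa_{\scriptscriptstyle\alpha},t)-1\bigr)$. Differentiating in $t$ produces
\begin{equation*}
\partial_t f(\alpha,t) \;=\; \partial_t f(\kappa_{\scriptscriptstyle\alpha},t)\,\cdot\, f(\alpha,t),
\end{equation*}
and extracting the coefficient of $t^{n-1}/(n-1)!$ by the Cauchy product yields the classical moment--cumulant recurrence
\begin{equation*}
m_n \;=\; \sum_{k=0}^{n-1}\binom{n-1}{k}\,c_{k+1}\,m_{n-1-k}, \qquad m_j:=E[\alpha^j],\quad c_j:=E[\kappa_{\scriptscriptstyle\alpha}^j].
\end{equation*}

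Next I would rewrite this recurrence umbrally. Because $\kappa_{\scriptscriptstyle\alpha}$ and $\alpha$ are distinct umbrae, hence uncorrelated, expanding the binomial inside $E\bigl[\kappa_{\scriptscriptstyle\alpha}(\kappa_{\scriptscriptstyle\alpha}+\alpha)^{n-1}\bigr]$ gives precisely $\sum_k\binom{n-1}{k}c_{k+1}m_{n-1-k}$, which establishes the first identity. (One can alternatively give a purely combinatorial derivation using $\alpha^n\simeq\sum_{\pi\in\Pi_n}(\kappa_{\scriptscriptstyle\alpha})_\pi$ from Theorem~\ref{thm fund 1} and then splitting off the block containing the element $1$; the two arguments are equivalent.)

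For the second identity I would invert the generating-function equation, writing $\partial_t f(\kappa_{\scriptscriptstyle\alpha},t)=\partial_t f(\alpha,t)\,/\,f(\alpha,t)$. Since by definition the inverse umbra $-1\punt\alpha$ satisfies $f(-1\punt\alpha,t)=f(\alpha,t)^{-1}$, this becomes $\partial_t f(\kappa_{\scriptscriptstyle\alpha},t)=\partial_t f(\alpha,t)\cdot f(-1\punt\alpha,t)$. A second Cauchy-product coefficient extraction gives
\begin{equation*}
c_n \;=\; \sum_{k=0}^{n-1}\binom{n-1}{k}\,m_{k+1}\,E\bigl[(-1\punt\alpha)^{n-1-k}\bigr],
\end{equation*}
and the uncorrelation of $\alpha$ with $-1\punt\alpha$ identifies the right-hand side with $E\bigl[\alpha(\alpha-1\punt\alpha)^{n-1}\bigr]$.

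The only subtle point, and the step where a careless reader could go astray, is the convention governing the symbol $\alpha-1\punt\alpha$: the term $-1\punt\alpha$ must be read as the \emph{inverse} umbra (generating function $1/f(\alpha,t)$), not as the negation of an independent similar copy of $\alpha$. The two readings already disagree at $n=3$, and only the inverse-umbra reading reproduces $c_3=m_3-3m_1m_2+2m_1^3$. Once this convention is fixed, the proof is just the logarithmic derivative of $\log f(\alpha,t)=f(\kappa_{\scriptscriptstyle\alpha},t)-1$ followed by routine binomial bookkeeping.
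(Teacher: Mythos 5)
Your proof is correct, and it takes a genuinely different route from the paper's. For the first identity the paper simply invokes the known umbral identity $(\beta\punt\alpha)^n\simeq\alpha(\alpha+\beta\punt\alpha)^{n-1}$ from Di Nardo--Senato and substitutes $\alpha\mapsto\kappa_{\scriptscriptstyle\alpha}$ via $\alpha\equiv\beta\punt\kappa_{\scriptscriptstyle\alpha}$; for the second it expands $\alpha(\alpha-1\punt\alpha)^{n-1}$ directly as a sum over partitions, using the moments of the inverse umbra, and matches the result term by term against the explicit M\"obius-function formula (\ref{id:cum vs mom}). You instead derive both halves uniformly from the single relation $\log f(\alpha,t)=f(\kappa_{\scriptscriptstyle\alpha},t)-1$ by logarithmic differentiation, obtaining the two convolution recurrences $m_n=\sum_k\binom{n-1}{k}c_{k+1}m_{n-1-k}$ and $c_n=\sum_k\binom{n-1}{k}m_{k+1}E[(-1\punt\alpha)^{n-1-k}]$ and recognizing them as the umbral evaluations in question. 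Your approach is more self-contained and symmetric (it re-proves the cited $\beta\punt\alpha$ identity rather than importing it), while the paper's partition expansion has the advantage of landing directly on the closed form (\ref{id:cum vs mom}), which is the shape of formula it reuses later for the boolean and free analogues. Your emphasis on reading $-1\punt\alpha$ as the compositional/additive inverse umbra (generating function $f(\alpha,t)^{-1}$) rather than a negated independent copy is well taken and is indeed the paper's convention; note only that the phrase ``distinct umbrae, hence uncorrelated'' for $\kappa_{\scriptscriptstyle\alpha}$ and $\alpha$ rests on the saturated-calculus convention that auxiliary umbrae are adjoined as fresh alphabet letters --- the same tacit convention the paper itself uses when it multiplies $\alpha$ against $\beta\punt\alpha$ --- so it is acceptable but worth stating explicitly.
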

\begin{proof}
Sine for any umbra $\alpha \in A$ we have $(\beta \punt \alpha)^n
\simeq \alpha (\alpha + \beta \punt \alpha)^{n-1},$ see~\cite{DiNaSen2},
we obtain the former in equivalence (\ref{id:class param 1}) replacing
$\alpha$ by  $\kappa_{\scriptscriptstyle\alpha}\equiv\chi\punt\alpha.$
The latter can be proved as follows. We have
\begin{displaymath}
\alpha(\alpha-1\punt\alpha)^{n-1}\simeq\sum_{\scriptscriptstyle1\leq
k\leq n \atop \lambda\vdash n-k}{n-1\choose
k-1} \mathrm{d}_{\lambda}(-1)_{\ell(\lambda)} \, \alpha^k \, \alpha_{\lambda},
\end{displaymath}
and, setting $\lambda\leftarrow\lambda\cup k$ (i.e. a part equal to $k$ is joined with $\lambda$), we recover equation
(\ref{id:cum vs mom}).
\end{proof}
%
\subsection{Boolean cumulants}
%
The notion of boolean cumulant requires the connection
between umbrae and ordinary generating functions. We obtain this
connection simply by multiplying an umbra by the boolean unity
umbra $\bar{u},$ whose moments are $\bar{u}^n\simeq n!.$ In fact, if
$\alpha$ has moments $\alpha^n\simeq a_n$, the umbra $\bar{\alpha}
\equiv \bar{u}\alpha$ has generating function $f(\bar{\alpha},t)
=1+a_1t+a_2t^2+\cdots.$ Note that, $\alpha\equiv\gamma$ if and only if $\bar{\alpha}\equiv\bar{\gamma}$. The following theorem is the analogous
of Theorem~\ref{thm fund 1} for the lattice $(\mathcal{I}_n,\leq).$
\begin{thm}\label{thm fund 2}
Let ${\mathfrak f}$, ${\mathfrak g}$ and ${\mathfrak h}$ be three
multiplicative functions on the
lattice $(\mathcal{I}_n,\leq)$. If $\alpha$, $\gamma$ and $\omega$
are three umbrae with moments $\alpha^n\simeq f_n,$ $\gamma^n\simeq
g_n,$ and $\omega^n\simeq h_n,$ then we have
${\mathfrak h} = {\mathfrak f} \diamond {\mathfrak g} \Longleftrightarrow
\bar{\omega}\equiv\bar{\gamma}\punt\beta\punt\bar{\alpha}.$
\end{thm}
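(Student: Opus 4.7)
Plan: I would reduce Theorem~\ref{thm fund 2} to Theorem~\ref{thm3} by converting both sides of the claimed equivalence into the same identity of ordinary generating functions, exploiting the fact that the boolean unity $\bar u$ turns exponential generating functions into ordinary ones.

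First I would record that for any umbra $\alpha$ with $\alpha^n\simeq a_n$, uncorrelation of $\bar u$ and $\alpha$ together with $\bar u^n\simeq n!$ gives $E[\bar\alpha^n]=n!\,a_n$, hence $f(\bar\alpha,t)=1+\sum_{n\geq 1}a_n t^n$. Writing $F(t)=f(\bar\alpha,t)$, $G(t)=f(\bar\gamma,t)$ and $H(t)=f(\bar\omega,t)$, these are precisely the ordinary power series appearing in the hypothesis of Theorem~\ref{thm3}.

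Next I would apply the composition formula $f(\gamma\punt\beta\punt\alpha,t)=f[\gamma,f(\alpha,t)-1]$ with $\bar\gamma$ and $\bar\alpha$ in place of $\gamma$ and $\alpha$. Evaluating the outer series at $u=F(t)-1$ yields
\[
f(\bar\gamma\punt\beta\punt\bar\alpha,t)=G\bigl[F(t)-1\bigr],
\]
so $\bar\omega\equiv\bar\gamma\punt\beta\punt\bar\alpha$ if and only if $H(t)=G[F(t)-1]$. Theorem~\ref{thm3}, applied with its ``$f$'' and ``$g$'' playing the roles of $G$ and $F$ respectively, then converts this functional identity into $\mathfrak h=\mathfrak f\diamond\mathfrak g$, which closes the argument.

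I do not anticipate a serious obstacle. The only slightly delicate point is keeping track of the distinct similar copies $\bar u',\bar u'',\ldots$ implicitly sitting inside $\bar\alpha_\lambda$ when one unpacks $\bar\alpha=\bar u\alpha$, since these must be uncorrelated both with each other and with the $\bar u$-copy absorbed in $\bar\gamma$; this is taken care of automatically by the uncorrelation rule. An alternative combinatorial proof, paralleling the one given for Theorem~\ref{thm fund 1}, would expand $(\bar\gamma\punt\beta\punt\bar\alpha)^n$ via (\ref{def:comp mom}) and recognize the emerging factor $\ell(\lambda)!/m(\lambda)!$ as the number of interval partitions of $[n]$ of shape $\lambda$, recovering (\ref{id:h=f diamond g}) directly; but the generating function route above is shorter.
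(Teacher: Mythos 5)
Your argument is correct, but your primary route is genuinely different from the paper's. You reduce the statement to Theorem~\ref{thm3} by passing to ordinary generating functions: since $f(\bar{\alpha},t)$, $f(\bar{\gamma},t)$, $f(\bar{\omega},t)$ are exactly the ordinary series attached to $\mathfrak f$, $\mathfrak g$, $\mathfrak h$, the composition rule gives $\bar{\omega}\equiv\bar{\gamma}\punt\beta\punt\bar{\alpha}$ if and only if $H(t)=G[F(t)-1]$, and Theorem~\ref{thm3} --- with its $f$ and $g$ playing the roles of $G$ and $F$, a swap you handle correctly so that its conclusion $\mathfrak h=\mathfrak g\diamond\mathfrak f$ reads $\mathfrak h=\mathfrak f\diamond\mathfrak g$ in the present notation --- finishes the job. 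The paper instead argues combinatorially and self-containedly: it expands $(\bar{\gamma}\punt\beta\punt\bar{\alpha})^n$ via (\ref{def:comp mom}), uses $\bar{\alpha}_{\lambda}\simeq\lambda!\,\alpha_{\lambda}$ to convert the coefficient $\mathrm{d}_{\lambda}$ into $\ell(\lambda)!/m(\lambda)!$, and identifies the latter as the number of interval partitions of shape $\lambda$, thereby recovering (\ref{id:h=f diamond g}) directly --- which is precisely the ``alternative combinatorial proof'' you sketch in your closing paragraph. Your route is shorter but outsources the combinatorics to Theorem~\ref{thm3}, which the paper only recalls from the literature without proof; the paper's route in effect reproves the relevant implication of that theorem in umbral terms, in parallel with its proof of Theorem~\ref{thm fund 1}. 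Both are valid, and your observation that the uncorrelation rule automatically manages the various copies of $\bar u$ is the right thing to note.
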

\begin{proof}
Since $\bar{\alpha}_{\lambda}\simeq\lambda!\alpha_{\lambda}$,
from (\ref{def:comp mom}) the
moments $h_n$ of $\bar{\gamma}\punt\beta\punt\bar{\alpha}$ are
\begin{equation}\label{id:bool 1}
h_n=\sum_{\lambda\vdash
n} \, \frac{\ell(\lambda)!}{m(\lambda)!} \, g_{\ell(\lambda)} \, f_{\lambda}.
\end{equation}
But $\ell(\lambda)!/m(\lambda)!$ is the number of interval
partitions of shape $\lambda$, so that ${\mathfrak h} = {\mathfrak
f} \diamond {\mathfrak g}.$
\end{proof}
Since the $\alpha$-cumulant umbra is such that $\kappa_{\alpha} \equiv
\chi. \alpha \equiv u^{\scriptscriptstyle<-1>} \punt \beta \punt \alpha,$
we define the $\alpha$-\textit{boolean cumulant umbra} by
taking the \lq\lq bar version\rq\rq of the previous similarity.
\begin{defn} \label{def:bool cum umb}
The $\alpha$-\textit{boolean cumulant umbra} is the umbra
$\eta_{\alpha}$ such that $\bar{\eta}_{\scriptscriptstyle\alpha}\equiv\bar{u}^{\scriptscriptstyle<-1>}
\punt\beta\punt\bar{\alpha}.$
\end{defn}
If $h_n$ denotes the $n$-th moment of $\alpha$-boolean cumulant umbra,
from (\ref{id:bool 1}) we have
\begin{equation}
h_n = \sum_{\lambda\vdash n}\,
\frac{\ell(\lambda)!}{m(\lambda)!}\, (-1)^{\ell(\lambda)-1} \,
m_{\lambda}. \label{id:bool vs mom}
\end{equation}
Definition  \ref{def:bool cum umb} is based on the following
proposition that states that $h_n$ in (\ref{id:bool vs mom})
are the same as the coefficients of $H(t)$ in (\ref{def:bool cum}).
\begin{prop}
If $\eta_{\scriptscriptstyle\alpha}$ is
the $\alpha$-\textit{boolean cumulant umbra}, then
$f(\bar{\eta}_{\scriptscriptstyle\alpha},t)= 2 - f(\bar{\alpha},t)^{-1}$
and $ f(\bar{\alpha},t) = (1 - [f(\bar{\eta}_{\scriptscriptstyle\alpha},t) -1])^{-1}.$
\end{prop}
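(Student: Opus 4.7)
The plan is to compute everything through generating functions, using only the definition $\bar{\eta}_{\scriptscriptstyle\alpha}\equiv\bar{u}^{\scriptscriptstyle<-1>}\punt\beta\punt\bar{\alpha}$ and the composition formula $f(\gamma\punt\beta\punt\alpha,t)=f[\gamma,f(\alpha,t)-1]$ recalled in the preliminaries. The key preliminary step is to identify $f(\bar{u}^{\scriptscriptstyle<-1>},t)$ explicitly; once this is in hand, the rest is a single substitution.

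First I would compute $f(\bar{u},t)$. Since $\bar{u}^n\simeq n!$, we have $f(\bar{u},t)=\sum_{n\geq 0}t^n=(1-t)^{-1}$. Next I would find the compositional inverse $f(\bar{u}^{\scriptscriptstyle<-1>},t)$ from the defining relation $\bar{u}\punt\beta\punt\bar{u}^{\scriptscriptstyle<-1>}\equiv\chi$, which translates to $f[\bar{u},f(\bar{u}^{\scriptscriptstyle<-1>},t)-1]=1+t$. Setting $s=f(\bar{u}^{\scriptscriptstyle<-1>},t)-1$, this reads $(1-s)^{-1}=1+t$, so $s=t/(1+t)$ and hence
\begin{equation*}
f(\bar{u}^{\scriptscriptstyle<-1>},t)=1+\frac{t}{1+t}.
\end{equation*}

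Now I apply the composition formula to $\bar{\eta}_{\scriptscriptstyle\alpha}\equiv\bar{u}^{\scriptscriptstyle<-1>}\punt\beta\punt\bar{\alpha}$. With $y=f(\bar{\alpha},t)-1$, this gives
\begin{equation*}
f(\bar{\eta}_{\scriptscriptstyle\alpha},t)=f[\bar{u}^{\scriptscriptstyle<-1>},y]=1+\frac{y}{1+y}=1+\frac{f(\bar{\alpha},t)-1}{f(\bar{\alpha},t)}=2-f(\bar{\alpha},t)^{-1},
\end{equation*}
which is the first claim. For the second claim, one simply rearranges: $f(\bar{\alpha},t)^{-1}=2-f(\bar{\eta}_{\scriptscriptstyle\alpha},t)=1-[f(\bar{\eta}_{\scriptscriptstyle\alpha},t)-1]$, whence $f(\bar{\alpha},t)=(1-[f(\bar{\eta}_{\scriptscriptstyle\alpha},t)-1])^{-1}$.

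The only potentially subtle point is the calculation of $f(\bar{u}^{\scriptscriptstyle<-1>},t)$: one must resist the temptation to confuse it with the multiplicative inverse $1/f(\bar{u},t)=1-t$ and instead use the convention that $\alpha^{\scriptscriptstyle<-1>}$ refers to the compositional inverse of the delta series $f(\alpha,t)-1$. Once this conceptual hurdle is cleared, the proof is a one-line substitution, and the fact that the resulting $H(t)=f(\bar{\eta}_{\scriptscriptstyle\alpha},t)-1$ satisfies $M(t)=1/(1-H(t))$ with $M(t)=f(\bar{\alpha},t)$ shows that $\bar{\eta}_{\scriptscriptstyle\alpha}$ indeed encodes the boolean cumulants in the sense of (\ref{def:bool cum}).
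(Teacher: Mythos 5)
Your proof is correct and follows the same overall strategy as the paper's: determine the generating function attached to $\bar{u}^{\scriptscriptstyle<-1>}$ and then compose with $f(\bar{\alpha},t)-1$. The only difference is in how that generating function is obtained: the paper stays inside the umbral syntax, showing $\bar{u}\equiv -1\punt-\chi$ and hence $\bar{u}^{\scriptscriptstyle<-1>}\punt\beta\equiv-\chi\punt-\beta$ with $f(\bar{u}^{\scriptscriptstyle<-1>}\punt\beta,t)=2-e^{-t}$, whereas you invert the series $(1-t)^{-1}$ directly from the defining relation $\bar{u}\punt\beta\punt\bar{u}^{\scriptscriptstyle<-1>}\equiv\chi$ to get $f(\bar{u}^{\scriptscriptstyle<-1>},t)=1+t/(1+t)$; the two computations are equivalent and your final substitution correctly yields $2-f(\bar{\alpha},t)^{-1}$.
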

\begin{proof}
We have $f(\bar{u},t)=(1-t)^{-1}$ so
$\bar{u} \equiv -1 \punt - \chi$ and $-1 \punt \bar{u} \equiv -
\chi.$  Moreover, we have $\bar{u}^{<-1>} \punt \beta \equiv -\chi \punt -\beta,$
since  $-\chi \punt - \beta \punt \bar{u} \equiv -\chi \punt -\beta \punt
-1 \punt -\chi \equiv -\chi \punt \beta \punt -1 \punt -1 \punt -
\chi \equiv -\chi \punt \beta \punt -\chi \equiv \chi,$ this because
$-\chi$ is the compositional inverse of itself and
$\bar{u}^{<-1>} \punt \beta \punt \bar{u} \equiv \chi.$
Therefore we have $f(\bar{u}^{<-1>} \punt \beta,t) = 2 - e^{-t},$
by which the results follow.
\end{proof}
\begin{thm}[Boolean Inversion Theorem] \label{id:bool inv}
If $\eta_{\scriptscriptstyle\alpha}$ is the $\alpha$-boolean
cumulant, then $\bar{\alpha}\equiv\bar{u}\punt\beta\punt
\bar{\eta}_{\scriptscriptstyle\alpha}.$
\end{thm}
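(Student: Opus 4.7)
The plan is to invert the similarity defining $\bar{\eta}_{\scriptscriptstyle\alpha}$. By Definition~\ref{def:bool cum umb} we have $\bar{\eta}_{\scriptscriptstyle\alpha}\equiv\bar{u}^{\scriptscriptstyle<-1>}\punt\beta\punt\bar{\alpha}$, so applying the operator $\bar{u}\punt\beta\punt(\,\cdot\,)$ to both sides should recover $\bar{\alpha}$ on the right. Concretely, I would write
\[
\bar{u}\punt\beta\punt\bar{\eta}_{\scriptscriptstyle\alpha}\equiv\bar{u}\punt\beta\punt(\bar{u}^{\scriptscriptstyle<-1>}\punt\beta\punt\bar{\alpha}),
\]
and show that the right-hand side collapses to $\bar{\alpha}$.

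The collapse uses three ingredients already recorded in the excerpt. First, the associativity of the dot operation, $(\gamma\punt\beta)\punt\alpha\equiv\gamma\punt(\beta\punt\alpha)$, lets me regroup the five-term product and rewrite the right-hand side as $(\bar{u}\punt\beta\punt\bar{u}^{\scriptscriptstyle<-1>})\punt\beta\punt\bar{\alpha}$. Second, the defining property of the compositional inverse, $\alpha\punt\beta\punt\alpha^{\scriptscriptstyle<-1>}\equiv\chi$, gives $\bar{u}\punt\beta\punt\bar{u}^{\scriptscriptstyle<-1>}\equiv\chi$. Third, $\chi\punt\beta\equiv u$ together with $u\punt\bar{\alpha}\equiv\bar{\alpha}$ yields $\chi\punt\beta\punt\bar{\alpha}\equiv\bar{\alpha}$. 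Chaining these three similarities produces the desired $\bar{u}\punt\beta\punt\bar{\eta}_{\scriptscriptstyle\alpha}\equiv\bar{\alpha}$.

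As a sanity check I would verify the conclusion at the level of generating functions using the preceding proposition: from $f(\bar{\eta}_{\scriptscriptstyle\alpha},t)-1=1-f(\bar{\alpha},t)^{-1}$ and $f(\bar{u},s)=(1-s)^{-1}$, the composition formula $f(\gamma\punt\beta\punt\alpha,t)=f(\gamma,f(\alpha,t)-1)$ gives
\[
f(\bar{u}\punt\beta\punt\bar{\eta}_{\scriptscriptstyle\alpha},t)=\bigl(1-(1-f(\bar{\alpha},t)^{-1})\bigr)^{-1}=f(\bar{\alpha},t),
\]
reconfirming the similarity. The main (and only) obstacle is justifying the regrouping step, namely that the two parenthesizations of $\bar{u}\punt\beta\punt\bar{u}^{\scriptscriptstyle<-1>}\punt\beta\punt\bar{\alpha}$ are umbrally equivalent; but this is immediate from the associativity of the dot operation applied twice, and it is mirrored on the generating-function side by the associativity of composition of formal power series.
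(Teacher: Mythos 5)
Your proof is correct and is essentially the paper's own argument: the paper's one-line proof ("left dot product of both sides of the definition with $\bar{u}\punt\beta$") is exactly your chain $\bar{u}\punt\beta\punt\bar{u}^{\scriptscriptstyle<-1>}\punt\beta\punt\bar{\alpha}\equiv\chi\punt\beta\punt\bar{\alpha}\equiv u\punt\bar{\alpha}\equiv\bar{\alpha}$, spelled out. The generating-function check is a harmless extra confirmation.
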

\begin{proof}
The result follows from (\ref{def:bool cum umb}) by left dot product
of both sides with $\bar{u} \punt \beta$.
\end{proof}
The unique umbra (up to similarity) having sequence
of boolean cumulants $\{1\}_{n\geq 1}$ is an umbra $\alpha$ such
that $\bar{\alpha}\equiv\bar{u}\punt\beta\punt\bar{u}\equiv (2\bar{u})_
{\scriptscriptstyle D}.$ Since $\bar{\alpha}^n\simeq(2\bar{u})_{\scriptscriptstyle D}^n\simeq n(2\bar{u})^{n-1}\simeq n!2^{n-1}$, then such an umbra has moments $2^{n-1},$ that is the number
of interval partitions $\mathcal{I}_n$.
The following theorem gives a parametrization of boolean
cumulants and moments. The proof is omitted.
\begin{thm} [Boolean parametrization] \label{par2} If $\eta_{\scriptscriptstyle\alpha}
$ is the $\alpha$-boolean cumulant umbra, then
\begin{equation}
\bar{\alpha}^n\simeq \bar{\eta}_{\scriptscriptstyle\alpha}(\bar{\eta}_
{\scriptscriptstyle\alpha}+
2\punt\bar{\alpha})^{n-1} \quad \hbox{and} \quad
\bar{\eta}_{\scriptscriptstyle\alpha}^n\simeq\bar{\alpha}(\bar{\alpha}-2
\punt\bar{\alpha})^{n-1}.\label{id:bool param 1}
\end{equation}
\end{thm}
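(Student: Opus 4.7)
The plan is to mirror the proof of Theorem~\ref{par1} by exploiting the generating-function characterization of $\bar\eta_\alpha$ from the preceding Proposition. Written in both directions, that Proposition yields
\begin{equation*}
f(\bar\alpha,t) = \bigl(2 - f(\bar\eta_\alpha,t)\bigr)^{-1} \qquad\text{and}\qquad f(\bar\eta_\alpha,t) = 2 - f(\bar\alpha,t)^{-1}.
\end{equation*}
Differentiating each identity with respect to $t$ and using the relation $f(\bar\alpha,t)^{\pm 2} = f(\pm 2\punt\bar\alpha,t)$, one obtains
\begin{equation*}
f'(\bar\alpha,t) = f'(\bar\eta_\alpha,t)\, f(2\punt\bar\alpha,t), \qquad f'(\bar\eta_\alpha,t) = f'(\bar\alpha,t)\, f(-2\punt\bar\alpha,t).
\end{equation*}

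From these two equalities I would extract the coefficient of $t^{n-1}/(n-1)!$ on both sides. The Cauchy product on the right-hand side of the first equality yields
\begin{equation*}
\bar\alpha^n \simeq \sum_{k=0}^{n-1}\binom{n-1}{k}\,\bar\eta_\alpha^{k+1}\,(2\punt\bar\alpha)^{n-1-k},
\end{equation*}
and since $\bar\eta_\alpha$ and $2\punt\bar\alpha$ can be realized by disjoint families of auxiliary umbrae, they are uncorrelated and the binomial theorem refolds the sum into $\bar\eta_\alpha(\bar\eta_\alpha + 2\punt\bar\alpha)^{n-1}$. The same reasoning applied to the second equality produces $\bar\eta_\alpha^n \simeq \bar\alpha(\bar\alpha - 2\punt\bar\alpha)^{n-1}$.

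The only delicate step is the uncorrelation point used to refold the binomial sum, which is the same technical subtlety present in the proof of Theorem~\ref{par1} (where $\alpha$ and $\beta\punt\alpha$ are likewise silently treated as uncorrelated). An alternative, more combinatorial route would be to establish first the auxiliary lemma $(\bar{u}\punt\beta\punt\gamma)^{n} \simeq \gamma\bigl(\gamma + 2\punt(\bar{u}\punt\beta\punt\gamma)\bigr)^{n-1}$ for every umbra $\gamma$, which is the boolean analogue of the identity $(\beta\punt\gamma)^{n}\simeq\gamma(\gamma+\beta\punt\gamma)^{n-1}$ invoked in the proof of Theorem~\ref{par1}, and then specialize to $\gamma = \bar\eta_\alpha$ via the Boolean Inversion Theorem; the dual lemma with $\bar{u}^{<-1>}$ in place of $\bar{u}$ would handle the inverse formula.
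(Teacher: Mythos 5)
Your proof is correct. Note first that the paper itself omits the proof of this theorem, so there is no internal argument to match against; on its own terms your derivation goes through. Differentiating $f(\bar{\alpha},t)=(2-f(\bar{\eta}_{\scriptscriptstyle\alpha},t))^{-1}$ and $f(\bar{\eta}_{\scriptscriptstyle\alpha},t)=2-f(\bar{\alpha},t)^{-1}$ indeed yields $f'(\bar{\alpha},t)=f'(\bar{\eta}_{\scriptscriptstyle\alpha},t)\,f(\bar{\alpha},t)^{2}$ and $f'(\bar{\eta}_{\scriptscriptstyle\alpha},t)=f'(\bar{\alpha},t)\,f(\bar{\alpha},t)^{-2}$; the identifications $f(\bar{\alpha},t)^{\pm 2}=f(\pm 2\punt\bar{\alpha},t)$ are exactly the paper's rules for the dot operation and the inverse umbra; and the binomial refolding is legitimate because, in the saturated calculus, $\bar{\eta}_{\scriptscriptstyle\alpha}$ and $2\punt\bar{\alpha}$ are distinct auxiliary umbrae and hence uncorrelated --- the same convention under which the statement of the theorem (and of Theorem~\ref{par1}) is meaningful in the first place, so your ``delicate step'' is a convention rather than a gap. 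Your route is, however, genuinely different from what the paper's machinery suggests: the intended argument would mirror the proof of Theorem~\ref{par1}, expanding $\bar{\alpha}(\bar{\alpha}-2\punt\bar{\alpha})^{n-1}$ over partitions and using $(-2)_{\ell(\lambda)-1}=(-1)^{\ell(\lambda)-1}\ell(\lambda)!$ together with $\bar{\alpha}_{\lambda}\simeq\lambda!\,\alpha_{\lambda}$ to recover exactly the coefficients $\ell(\lambda)!/m(\lambda)!$ of (\ref{id:bool vs mom}) (equivalently, it is the case $g_{n}=2$ of the generalized Abel expansion (\ref{lemma0})), while the direct formula would come from the Boolean Inversion Theorem~\ref{id:bool inv} via the boolean analogue of $(\beta\punt\gamma)^{n}\simeq\gamma(\gamma+\beta\punt\gamma)^{n-1}$ --- precisely the auxiliary lemma you sketch as your alternative route. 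The generating-function derivation buys brevity and makes the role of the constant $2$ transparent (it is just the exponent in $f(\bar{\alpha},t)^{2}$), at the cost of bypassing the interval-partition combinatorics on which the rest of the section is built; both arguments are sound.
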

Similarly to the $\alpha$-cumulant umbra, we can state additivity and homogeneity properties
also for the $\alpha$-boolean cumulant umbra.
\begin{thm} [Homogeneity property] \label{id:hom boo}
If $\eta_{\scriptscriptstyle\alpha}$ is the $\alpha$-boolean
cumulant umbra, then $\eta_{\scriptscriptstyle c\alpha}\equiv c
\eta_{\scriptscriptstyle\alpha}.$
\end{thm}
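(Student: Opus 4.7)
The plan is to reduce the homogeneity property to the boolean parametrization of Theorem~\ref{par2}, which already expresses $\bar{\eta}_{\scriptscriptstyle\alpha}^n$ directly in terms of $\bar{\alpha}$ and $2\punt\bar{\alpha}$. First I would record two elementary compatibility facts about the interaction of a scalar $c$ with the bar operation and with the dot operation: (i) $\overline{c\alpha}\equiv c\bar{\alpha}$, which is immediate from $\bar{\alpha}\equiv\bar{u}\alpha$ together with the uncorrelation of $\bar{u}$ and $\alpha$; and (ii) $n\punt(c\alpha)\equiv c(n\punt\alpha)$, which follows by writing $n\punt(c\alpha)\equiv(c\alpha)'+\cdots+(c\alpha)^{(n)}$ and noting that distinct similar copies of $c\alpha$ may be taken as $c$ times distinct similar copies of $\alpha$.

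Next, I would apply the second equivalence of Theorem~\ref{par2} to the umbra $c\alpha$ to obtain
\[
\bar{\eta}_{\scriptscriptstyle c\alpha}^n \simeq \overline{c\alpha}\bigl(\overline{c\alpha}-2\punt\overline{c\alpha}\bigr)^{n-1}.
\]
The two facts above allow me to pull the scalar $c$ outside every factor, yielding
\[
\bar{\eta}_{\scriptscriptstyle c\alpha}^n \simeq c\bar{\alpha}\bigl(c\bar{\alpha}-c(2\punt\bar{\alpha})\bigr)^{n-1} \simeq c^n\,\bar{\alpha}(\bar{\alpha}-2\punt\bar{\alpha})^{n-1} \simeq c^n\,\bar{\eta}_{\scriptscriptstyle\alpha}^n,
\]
where the last step is again Theorem~\ref{par2}. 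Since this holds for every $n\geq 1$, I conclude $\bar{\eta}_{\scriptscriptstyle c\alpha}\equiv c\bar{\eta}_{\scriptscriptstyle\alpha}\equiv\overline{c\eta_{\scriptscriptstyle\alpha}}$, and the observation recorded just before Theorem~\ref{thm fund 2} that $\alpha\equiv\gamma$ if and only if $\bar{\alpha}\equiv\bar{\gamma}$ delivers $\eta_{\scriptscriptstyle c\alpha}\equiv c\eta_{\scriptscriptstyle\alpha}$.

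I do not foresee any genuine obstacle; the only point that merits a moment of care is compatibility (ii), since it is the place where one uses the uncorrelation axiom of the umbral evaluation in an essential way. An alternative route would start directly from the definition $\bar{\eta}_{\scriptscriptstyle\alpha}\equiv\bar{u}^{\scriptscriptstyle<-1>}\punt\beta\punt\bar{\alpha}$ and argue that left-composition with $\bar{u}^{\scriptscriptstyle<-1>}\punt\beta$ transports the scaling by $c$ through to the boolean cumulant umbra, but this ultimately rests on the same observation that the scalar $c$ commutes with the dot operation on the right factor, so it amounts to repackaging the same computation.
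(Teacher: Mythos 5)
Your argument is correct, but it takes a different route from the paper's. The paper proves Theorem~\ref{id:hom boo} directly from Definition~\ref{def:bool cum umb}: since $\overline{c\alpha}\equiv c\bar{\alpha}$ and $\bar{u}^{\scriptscriptstyle<-1>}\punt\beta\punt (c\bar{\alpha})\equiv c\,(\bar{u}^{\scriptscriptstyle<-1>}\punt\beta\punt\bar{\alpha})$, one reads off $\bar{\eta}_{\scriptscriptstyle c\alpha}\equiv c\bar{\eta}_{\scriptscriptstyle\alpha}$ at once — this is precisely the ``alternative route'' you sketch in your last sentence, so you have in effect identified the paper's proof as your fallback. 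Your main line instead passes through the Abel-type parametrization $\bar{\eta}_{\scriptscriptstyle\alpha}^n\simeq\bar{\alpha}(\bar{\alpha}-2\punt\bar{\alpha})^{n-1}$ of Theorem~\ref{par2}, together with $\overline{c\alpha}\equiv c\bar{\alpha}$ and $n\punt(c\alpha)\equiv c\,(n\punt\alpha)$; both auxiliary facts are legitimate (the second is the standard distributivity of the dot operation over scalars, used by the paper itself in the free case), and Theorem~\ref{par2} is stated before the homogeneity theorem, so there is no circularity even though its proof is omitted. What your approach buys is uniformity: it is exactly the argument the paper uses for the \emph{free} homogeneity property (Theorem~\ref{thm:hom}, via the Abel parametrization and $-n\punt(c\alpha)\equiv c(-n\punt\alpha)$), and it extends verbatim to the generalized cumulants $\K_{\gamma,\alpha}$ of the last section. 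What the paper's route buys is brevity and independence from the (unproved-in-text) parametrization, since it needs only the defining similarity. One small point of care in your computation: in $\bar{\alpha}(\bar{\alpha}-2\punt\bar{\alpha})^{n-1}$ the occurrences of $\bar{\alpha}$ are to be read as uncorrelated similar copies (as in the expansion \eqref{lemma0}); the scalar $c$ still factors out of each copy, giving $c^n$ overall, so your conclusion stands.
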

\begin{proof}
Since $\bar{u}^{\scriptscriptstyle<-1>} \punt\beta\punt c \bar
{\alpha} \equiv c (\bar{u}^{\scriptscriptstyle<-1>} \punt\beta\punt \bar{\alpha})$ and $\overline{c\alpha} \equiv c \bar{\alpha}$, then from (\ref{def:bool cum umb}) we have $\bar{\eta}_{\scriptscriptstyle c\alpha}\equiv c\bar{\eta}_{\scriptscriptstyle\alpha}$ and finally $\eta_{\scriptscriptstyle c\alpha}\equiv c\eta_{\scriptscriptstyle\alpha}$.
\end{proof}
\begin{thm} [Additivity property]\label{thm:add bool}
If $\eta_{\scriptscriptstyle\alpha},
\eta_{\scriptscriptstyle\gamma} $ and
$\eta_{\scriptscriptstyle\xi}$ are the boolean cumulant umbrae of
$\alpha, \gamma$ and $\xi$ respectively, then
\begin{equation}\label{bool conv}
\eta_{\scriptscriptstyle\xi} \equiv \eta_
{\scriptscriptstyle\alpha} \stackrel{\punt}{+}
\eta_{\scriptscriptstyle\gamma} \Leftrightarrow -1 \punt
\bar{\xi} \equiv -1 \punt \bar{\alpha} \stackrel{\punt}{+} -1
\punt \bar{\gamma}.
\end{equation}
\end{thm}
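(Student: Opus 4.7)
The plan is first to reduce $\eta_{\scriptscriptstyle\xi}\equiv\eta_{\scriptscriptstyle\alpha}\stackrel{\punt}{+}\eta_{\scriptscriptstyle\gamma}$ to the corresponding equivalence between the barred umbrae, then to exploit a simple pointwise identity between $\bar{\eta}_{\scriptscriptstyle\alpha}$ and $-1\punt\bar{\alpha}$ that falls out of the previous proposition. Since $\omega_{1}\equiv\omega_{2}$ iff $\bar{\omega}_{1}\equiv\bar{\omega}_{2}$ and $\bar{\omega}^{n}\simeq n!\,\omega^{n}$, the disjoint-sum equivalence
\[
\eta_{\scriptscriptstyle\xi}\equiv\eta_{\scriptscriptstyle\alpha}\stackrel{\punt}{+}\eta_{\scriptscriptstyle\gamma}
\]
is the same as $\bar{\eta}_{\scriptscriptstyle\xi}\equiv\bar{\eta}_{\scriptscriptstyle\alpha}\stackrel{\punt}{+}\bar{\eta}_{\scriptscriptstyle\gamma}$, i.e.\ $\bar{\eta}_{\scriptscriptstyle\xi}^{\,n}\simeq \bar{\eta}_{\scriptscriptstyle\alpha}^{\,n}+\bar{\eta}_{\scriptscriptstyle\gamma}^{\,n}$ for every $n\geq 1$.

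Next, I would rewrite the previous proposition as $f(\bar{\eta}_{\scriptscriptstyle\alpha},t)-1=-(f(\bar{\alpha},t)^{-1}-1)$ and use $f(\bar{\alpha},t)^{-1}=f(-1\punt\bar{\alpha},t)$ (the defining property of the inverse umbra), obtaining the key pointwise identity
\[
\bar{\eta}_{\scriptscriptstyle\alpha}^{\,n}\simeq -(-1\punt\bar{\alpha})^{n}\qquad\text{for every }n\geq 1,
\]
and similarly for $\gamma$ and $\xi$. Substituting this into $\bar{\eta}_{\scriptscriptstyle\xi}^{\,n}\simeq\bar{\eta}_{\scriptscriptstyle\alpha}^{\,n}+\bar{\eta}_{\scriptscriptstyle\gamma}^{\,n}$ and multiplying through by $-1$ gives $(-1\punt\bar{\xi})^{n}\simeq(-1\punt\bar{\alpha})^{n}+(-1\punt\bar{\gamma})^{n}$ for all $n\geq 1$, which is exactly $-1\punt\bar{\xi}\equiv -1\punt\bar{\alpha}\stackrel{\punt}{+}-1\punt\bar{\gamma}$. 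Every step in this chain is reversible, so the stated equivalence follows in both directions.

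I expect no real obstacle: the entire argument is a bookkeeping exercise around the bar operation, the identity $f(\bar{\eta}_{\scriptscriptstyle\alpha},t)=2-f(\bar{\alpha},t)^{-1}$, and the fact that the generating function of the inverse umbra is the multiplicative inverse of the generating function. The only care needed is to ensure that the $n\geq 1$ restriction in the pointwise identity does not interfere with the $n=0$ term (both disjoint-sums and moments agree trivially there), so the moment-wise equivalences translate back to umbral similarities without loss.
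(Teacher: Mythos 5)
Your proof is correct, and every step checks out: the reduction to barred umbrae is legitimate (the paper itself notes $\alpha\equiv\gamma$ iff $\bar{\alpha}\equiv\bar{\gamma}$, and the bar operation commutes with disjoint sums since $\bar{\omega}^n\simeq n!\,\omega^n$), and the pointwise identity $\bar{\eta}_{\scriptscriptstyle\alpha}^{\,n}\simeq -(-1\punt\bar{\alpha})^n$ for $n\geq 1$ does follow from $f(\bar{\eta}_{\scriptscriptstyle\alpha},t)=2-f(\bar{\alpha},t)^{-1}$ together with $f(-1\punt\bar{\alpha},t)=f(\bar{\alpha},t)^{-1}$. Your route differs from the paper's in execution: the paper stays at the level of umbral similarities, substituting $-1\punt\bar{\alpha}\equiv(-\chi\punt\beta)\punt\bar{\eta}_{\scriptscriptstyle\alpha}$ into the right-hand side, distributing $-\chi\punt\beta$ over the disjoint sum, and then cancelling first $-\chi$ and then $\beta$ by left composition with $\chi$. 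Both arguments ultimately rest on the same Proposition relating $f(\bar{\eta}_{\scriptscriptstyle\alpha},t)$ to $f(\bar{\alpha},t)^{-1}$, but yours converts it immediately into a moment-by-moment statement and finishes by linearity, which is more elementary and makes the reversibility of each step manifest; the paper's version is more in the spirit of the umbral calculus it is advertising, reusing the composition machinery ($\gamma\punt(\mu+\nu)$ versus disjoint sums) at the cost of some less transparent manipulations with $-\chi\punt\beta$ and its compositional inverse.
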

\begin{proof}Let $-1 \punt \bar{\xi} \equiv -1 \punt \bar{\alpha} \stackrel{\punt}
{+} -1 \punt \bar{\gamma}$. Due to
$ -1 \punt \bar{\alpha} \equiv (- \chi \punt \beta) \punt \bar{\eta}_
{\scriptscriptstyle\alpha}$ , we have
$-\chi \punt \beta \punt \bar{\eta}_{\scriptscriptstyle\xi} \equiv -\chi
\punt \beta \punt \bar{\eta}_{\scriptscriptstyle\alpha}
\stackrel{\punt}{+} -\chi \punt \beta \punt
\bar{\eta}_{\scriptscriptstyle\gamma} \equiv -\chi \punt (\beta
\punt \bar{\eta}_{\scriptscriptstyle\alpha} + \beta \punt
\bar{\eta}_{\scriptscriptstyle\gamma})$ so that $\beta \punt
\bar{\eta}_{\scriptscriptstyle\xi} \equiv \beta \punt
\bar{\eta}_{\scriptscriptstyle\alpha} + \beta \punt
\bar{\eta}_{\scriptscriptstyle\gamma}$. Taking the left product of
both sides for $\chi,$ the result follows.
\end{proof}
We define the \textit{boolean convolution} of $\alpha$ and
$\gamma$ to be the umbra $\alpha\uplus\gamma$ such that
$\overline{\alpha\uplus\gamma}\equiv-1\punt(-1 \punt \bar{\alpha}
\stackrel{\punt}{+} -1 \punt \bar{\gamma}).$
Theorem~\ref{thm:add bool} assures this is the unique convolution
linearized by boolean cumulants. In this way, from (\ref{bool
conv}) we express the additivity property of the boolean cumulant
umbra with respect to the boolean convolution as follows
$\eta_{\scriptscriptstyle\alpha\uplus\gamma}\equiv
\eta_{\scriptscriptstyle\alpha}\stackrel{\punt}{+}\eta_{\scriptscriptstyle\gamma}.$
Since $\bar{\eta}_{cu} \equiv (\bar{u}^{\scriptscriptstyle<-1>}\punt\beta
\punt\bar{u})\punt c\equiv\chi\punt c\equiv c\chi,$ from (\ref{bool
conv}) we have
$\eta_{\scriptscriptstyle\alpha\uplus c u} \equiv \eta_
{\scriptscriptstyle\alpha} \stackrel{\punt}{+} c \, \chi$ that
gives the \textit{semi-invariance property}.
\par
Once more, note the analogy with the convolution linearized by classical cumulants,
that is $\alpha+\gamma\equiv -1\punt(-1\punt\alpha+-1\punt\gamma).$
%
\subsection{Free cumulants}
\begin{defn}[Free cumulant umbra] \label{free}
For a given umbra $\alpha,$ the unique umbra $\mathfrak{K}_
{\scriptscriptstyle \alpha}$ (up to similarity) such that
$(-1 \punt {\bar{\mathfrak{K}}}_{\scriptscriptstyle \alpha})_
{\scriptscriptstyle D} \equiv {\bar{\alpha}_{\scriptscriptstyle D}}^
{{\scriptscriptstyle <-1>}}$ is called the \textit{free
cumulant umbra} of $\alpha$.
\end{defn}
The moments of $\mathfrak{K}_{\scriptscriptstyle \alpha}$
will be called \textit{free cumulants} of the umbra $\alpha.$
Definition  \ref{free} is based on the following
proposition that states that the free cumulants of an
umbra $\alpha,$ whose moments are $m_n,$ are the coefficients
of $R(t)$ in (\ref{def:free cum gen fun}).
\begin{prop} \label{def2} If ${\mathfrak{K}}_{\scriptscriptstyle \alpha}$ is the
free cumulant umbra of $\alpha$, then $\bar{\alpha} \equiv
{\bar{\mathfrak{K}}}_{\scriptscriptstyle \alpha} \punt
\beta \punt \bar{\alpha}_{\scriptscriptstyle D}.$
\end{prop}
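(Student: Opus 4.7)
The plan is to unpack the defining condition $(-1\punt\bar{\mathfrak{K}}_{\scriptscriptstyle\alpha})_{\scriptscriptstyle D} \equiv \bar{\alpha}_{\scriptscriptstyle D}^{\scriptscriptstyle<-1>}$ step by step until it becomes the statement of the proposition. Since compositional inversion is characterized by $\phi\punt\beta\punt\phi^{\scriptscriptstyle<-1>}\equiv\chi$, the defining relation is equivalent to
$(-1\punt\bar{\mathfrak{K}}_{\scriptscriptstyle\alpha})_{\scriptscriptstyle D}\punt\beta\punt\bar{\alpha}_{\scriptscriptstyle D}\equiv\chi,$
and the idea is to collapse the two $D$-subscripts using identity (\ref{(deriv)}).

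Specifically, I apply (\ref{(deriv)}) with outer umbra $-1\punt\bar{\mathfrak{K}}_{\scriptscriptstyle\alpha}$ and inner umbra $\bar{\alpha}$, rewriting the left-hand side as $\bigl(\bar{\alpha} + (-1\punt\bar{\mathfrak{K}}_{\scriptscriptstyle\alpha})\punt\beta\punt\bar{\alpha}_{\scriptscriptstyle D}\bigr)_{\scriptscriptstyle D}$. Since $\chi\equiv\varepsilon_{\scriptscriptstyle D}$ and two umbrae are similar exactly when their derivation umbrae are, I can strip the outer $D$ and solve, obtaining
$(-1\punt\bar{\mathfrak{K}}_{\scriptscriptstyle\alpha})\punt\beta\punt\bar{\alpha}_{\scriptscriptstyle D}\equiv -1\punt\bar{\alpha}.$

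To conclude, I would invoke the elementary identity $(-1\punt\phi)\punt\beta\punt\psi\equiv -1\punt(\phi\punt\beta\punt\psi)$, which follows at once from the composition formula $f(\phi\punt\beta\punt\psi,t)=f[\phi,f(\psi,t)-1]$ together with $f(-1\punt\phi,t)=f(\phi,t)^{-1}$. Taking $-1\punt$ of both sides of the preceding equivalence then yields $\bar{\mathfrak{K}}_{\scriptscriptstyle\alpha}\punt\beta\punt\bar{\alpha}_{\scriptscriptstyle D}\equiv \bar{\alpha}$, as required.

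The main obstacle is keeping the bookkeeping straight. Identity (\ref{(deriv)}) is asymmetric in the roles of its two umbrae, so one must place $-1\punt\bar{\mathfrak{K}}_{\scriptscriptstyle\alpha}$ and $\bar{\alpha}$ on the correct sides in order that the outer $D$ wrap exactly the expression to be isolated. The step \emph{strip the outer $D$} also relies on the injectivity of $\omega\mapsto\omega_{\scriptscriptstyle D}$ on similarity classes, which is immediate from $f(\omega_{\scriptscriptstyle D},t)=1+tf(\omega,t)$ but is worth flagging.
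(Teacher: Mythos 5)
Your argument is correct and follows essentially the same route as the paper's proof: both reduce the defining relation to $(-1\punt\bar{\mathfrak{K}}_{\scriptscriptstyle\alpha})_{\scriptscriptstyle D}\punt\beta\punt\bar{\alpha}_{\scriptscriptstyle D}\equiv\chi$, collapse via identity (\ref{(deriv)}), strip the derivative, and solve using the inverse umbra. Your version is in fact slightly more careful than the paper's, since you place the factors in the order to which (\ref{(deriv)}) literally applies and make explicit the identity $(-1\punt\phi)\punt\beta\punt\psi\equiv-1\punt(\phi\punt\beta\punt\psi)$ that the paper uses tacitly.
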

\begin{proof} By using Definition~\ref{free}, we have
$\bar{\alpha}_{\scriptscriptstyle D}  \punt \beta \punt \bar{\alpha}_
{\scriptscriptstyle D}^{{\scriptscriptstyle <-1>}} \equiv
\bar{\alpha}_{\scriptscriptstyle D}  \punt \beta \punt (-1 \punt
\bar{\mathfrak{K}}_{\scriptscriptstyle \alpha})_{\scriptscriptstyle D}
$ and via (\ref{(deriv)}) we obtain
$\bar{\alpha}_{\scriptscriptstyle D}  \punt \beta \punt (-1 \punt \bar{\mathfrak
{K}}_{\scriptscriptstyle \alpha})_{\scriptscriptstyle D} \equiv
(\bar{\alpha} - 1 \punt \bar{\mathfrak{K}}_{\scriptscriptstyle \alpha} \punt
\beta \punt \bar{\alpha}_{\scriptscriptstyle D})_{\scriptscriptstyle
D}.$ As $\bar{\alpha}_{\scriptscriptstyle D}  \punt \beta \punt
{\bar{\alpha}_{\scriptscriptstyle D}^{\scriptscriptstyle<-1>}} \equiv
\chi$, then $\bar{\alpha} - 1 \punt \bar{\mathfrak{K}}_{\scriptscriptstyle \alpha} \punt
\beta \punt \bar{\alpha}_{\scriptscriptstyle D} \equiv \varepsilon
\Leftrightarrow \bar{\alpha} \equiv \bar{\mathfrak{K}}_{\scriptscriptstyle
\alpha} \punt \beta \punt \bar{\alpha}_{\scriptscriptstyle D}.$
\end{proof}
Proposition~\ref{def2} gives (\ref{def:free cum gen fun}), if
we set $f(\bar{\alpha},t)= M(t),$ $f(\bar{\mathfrak{K}}_{\scriptscriptstyle
\alpha},t) = R(t)$ and observe that $f(\bar{\alpha}_{\scriptscriptstyle D},t) =
1 + t f(\bar{\alpha},t).$
\begin{thm}\label{thm f}
If ${\mathfrak{K}}_{\scriptscriptstyle \alpha}$ is the free
cumulant umbra of $\alpha$, then
${\bar{\mathfrak{K}}}_{\scriptscriptstyle \alpha} \equiv \bar{\alpha} \punt
\beta \punt {\bar{\alpha}_{\scriptscriptstyle D}}^
{{\scriptscriptstyle <-1>}}$ and $\bar{\alpha}
\equiv {\bar{\mathfrak{K}}}_{\scriptscriptstyle \alpha} \punt \beta \punt
{(-1 \punt {\bar{\mathfrak {K}}}_{\scriptscriptstyle \alpha})_{\scriptscriptstyle
D}^{\scriptscriptstyle <-1>}}.$
\end{thm}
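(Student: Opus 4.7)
The plan is to derive both identities directly from Proposition~\ref{def2}, which gives $\bar{\alpha} \equiv \bar{\mathfrak{K}}_{\scriptscriptstyle \alpha} \punt \beta \punt \bar{\alpha}_{\scriptscriptstyle D}$, together with the defining relation $(-1 \punt \bar{\mathfrak{K}}_{\scriptscriptstyle \alpha})_{\scriptscriptstyle D} \equiv \bar{\alpha}_{\scriptscriptstyle D}^{\scriptscriptstyle<-1>}$ from Definition~\ref{free}. The only umbral facts I need beyond these are the characterization of compositional inverse, $\bar{\alpha}_{\scriptscriptstyle D} \punt \beta \punt \bar{\alpha}_{\scriptscriptstyle D}^{\scriptscriptstyle<-1>} \equiv \chi$, together with the identities $\beta \punt \chi \equiv u$ and $\omega \punt u \equiv \omega$, plus associativity of the dot product $(\gamma \punt \beta) \punt \alpha \equiv \gamma \punt (\beta \punt \alpha)$.

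For the first identity, I would right-compose both sides of Proposition~\ref{def2} with $\beta \punt \bar{\alpha}_{\scriptscriptstyle D}^{\scriptscriptstyle<-1>}$. Using associativity, the right-hand side becomes $\bar{\mathfrak{K}}_{\scriptscriptstyle \alpha} \punt \beta \punt (\bar{\alpha}_{\scriptscriptstyle D} \punt \beta \punt \bar{\alpha}_{\scriptscriptstyle D}^{\scriptscriptstyle<-1>})$, which collapses to $\bar{\mathfrak{K}}_{\scriptscriptstyle \alpha} \punt \beta \punt \chi \equiv \bar{\mathfrak{K}}_{\scriptscriptstyle \alpha} \punt u \equiv \bar{\mathfrak{K}}_{\scriptscriptstyle \alpha}$ by the inverse and unit relations recalled in Section~3. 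This yields exactly $\bar{\mathfrak{K}}_{\scriptscriptstyle \alpha} \equiv \bar{\alpha} \punt \beta \punt \bar{\alpha}_{\scriptscriptstyle D}^{\scriptscriptstyle<-1>}$.

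For the second identity, I would take the compositional inverse of both sides of the defining relation $(-1 \punt \bar{\mathfrak{K}}_{\scriptscriptstyle \alpha})_{\scriptscriptstyle D} \equiv \bar{\alpha}_{\scriptscriptstyle D}^{\scriptscriptstyle<-1>}$. Since the compositional inverse is unique up to similarity and is involutive, we get $(-1 \punt \bar{\mathfrak{K}}_{\scriptscriptstyle \alpha})_{\scriptscriptstyle D}^{\scriptscriptstyle<-1>} \equiv \bar{\alpha}_{\scriptscriptstyle D}$. Substituting this into Proposition~\ref{def2} immediately gives $\bar{\alpha} \equiv \bar{\mathfrak{K}}_{\scriptscriptstyle \alpha} \punt \beta \punt (-1 \punt \bar{\mathfrak{K}}_{\scriptscriptstyle \alpha})_{\scriptscriptstyle D}^{\scriptscriptstyle<-1>}$.

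I do not expect any real obstacle here; the proof is essentially a bookkeeping exercise in the umbral calculus, and the only subtle point to flag is the legitimacy of right-composition and the invocation of associativity of the dot product across the chain $\bar{\mathfrak{K}}_{\scriptscriptstyle \alpha} \punt \beta \punt \bar{\alpha}_{\scriptscriptstyle D} \punt \beta \punt \bar{\alpha}_{\scriptscriptstyle D}^{\scriptscriptstyle<-1>}$, which is ensured by the stated identity $(\gamma \punt \beta) \punt \alpha \equiv \gamma \punt (\beta \punt \alpha)$ in the saturated umbral calculus.
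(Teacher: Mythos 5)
Your proposal is correct and follows essentially the same route as the paper: the first identity is obtained by composing Proposition~\ref{def2} with $\beta \punt \bar{\alpha}_{\scriptscriptstyle D}^{\scriptscriptstyle<-1>}$ and collapsing $\bar{\alpha}_{\scriptscriptstyle D} \punt \beta \punt \bar{\alpha}_{\scriptscriptstyle D}^{\scriptscriptstyle<-1>} \equiv \chi$, and the second by inverting the defining relation to get $(-1 \punt \bar{\mathfrak{K}}_{\scriptscriptstyle \alpha})_{\scriptscriptstyle D}^{\scriptscriptstyle<-1>} \equiv \bar{\alpha}_{\scriptscriptstyle D}$ and substituting into Proposition~\ref{def2}. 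No gaps; this matches the paper's argument.
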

\begin{proof}
The former similarity follows from Theorem~\ref{def2} as we have
$ \bar{\alpha} \punt \beta \punt {\bar{\alpha}_{\scriptscriptstyle D}}^
{{\scriptscriptstyle <-1>}} \equiv {\bar{\mathfrak{K}}}_
{\scriptscriptstyle \alpha} \punt \beta \punt \bar{\alpha}_
{\scriptscriptstyle D} \punt \beta \punt {\bar{\alpha}_{\scriptscriptstyle D}}^
{{\scriptscriptstyle <-1>}}$ and $\bar{\alpha}_{\scriptscriptstyle D} \punt \beta \punt
{\bar{\alpha}_{\scriptscriptstyle D}}^{{\scriptscriptstyle <-1>}} \equiv
\chi.$ The latter similarity follows from Definition \ref{free},
by observing that $\beta \punt
\bar{\alpha}_ {\scriptscriptstyle D} \equiv \beta \punt (-1  \punt
{\bar{\mathfrak{K}}_{\scriptscriptstyle \alpha})_{\scriptscriptstyle
D}^{\scriptscriptstyle <-1>}}.$
\end{proof}
A parametrization of free cumulants and moments can be constructed by
using the so-called \textit{umbral Abel polynomials}~\cite{DiNaSen3}
\begin{equation}\label{Abel}
A_n(x, \alpha) \simeq \left\{ \begin{array}{ll}
u & \hbox{if $n=0,$} \\
 x ( x - n \punt \alpha)^{n-1} & \hbox{if $ n \geq 1$}. \end{array} \right.
\end{equation}
Note that if the umbra $\alpha$ is replaced by the umbra $a \punt u,$ with
$u$ the unity umbra and $a \in R,$ then $E[A_n(x, a.u)] = A_n(x,a)$ for
all $n \geq 1,$ where $\{A_n(x,a)\}$ denotes the Abel polynomial sequence,
$A_n(x,a)=x(x-na)^{n-1}.$
\begin{thm}[Free parametrization] \label{thm:par free}
If ${\mathfrak{K}}_{\scriptscriptstyle \alpha}$ is the free
cumulant umbra of $\alpha$, then
\begin{equation}
\bar{\alpha}^n \simeq {\bar{\mathfrak{K}}}_{\scriptscriptstyle \alpha}
({\bar{\mathfrak{K}}}_{\scriptscriptstyle \alpha} + n.{\bar{\mathfrak{K}}}_
{\scriptscriptstyle \alpha})^{n-1} \quad \hbox{and} \quad
{\bar{\mathfrak{K}}}_{\scriptscriptstyle \alpha}^n \simeq \bar{\alpha}
(\bar{\alpha} - n.\bar{\alpha})^{n-1}. \label{(mom)}
\end{equation}
\end{thm}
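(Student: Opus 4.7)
My plan is to reduce both similarities to a single umbral Abel--Lagrange identity
\[
(\gamma \punt \beta \punt \delta_{\scriptscriptstyle D}^{\scriptscriptstyle<-1>})^n \simeq A_n(\gamma,\delta) = \gamma(\gamma - n\punt\delta)^{n-1}, \qquad n \geq 1,
\]
valid for arbitrary umbrae $\gamma,\delta$ (the inverse $\delta_{\scriptscriptstyle D}^{\scriptscriptstyle<-1>}$ exists since $f(\delta_{\scriptscriptstyle D},t)-1 = t\,f(\delta,t)$ is a delta series with linear coefficient $1$). This identity is the umbral transcription of Lagrange inversion applied to $t\,f(\delta,t)$, and should be available from the Abel polynomial machinery of~\cite{DiNaSen3}. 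It is to be combined with the two explicit expressions for $\bar{\mathfrak{K}}_{\scriptscriptstyle\alpha}$ given in Theorem~\ref{thm f}.

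For the second similarity, I set $\gamma=\delta=\bar\alpha$. Using $\bar{\mathfrak{K}}_{\scriptscriptstyle\alpha}\equiv\bar\alpha\punt\beta\punt\bar\alpha_{\scriptscriptstyle D}^{\scriptscriptstyle<-1>}$ from Theorem~\ref{thm f}, the Abel--Lagrange identity immediately gives
\[
\bar{\mathfrak{K}}_{\scriptscriptstyle\alpha}^n \simeq A_n(\bar\alpha,\bar\alpha) = \bar\alpha(\bar\alpha - n\punt\bar\alpha)^{n-1}.
\]

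For the first similarity, I set $\gamma=\bar{\mathfrak{K}}_{\scriptscriptstyle\alpha}$ and $\delta=-1\punt\bar{\mathfrak{K}}_{\scriptscriptstyle\alpha}$. Using $\bar\alpha\equiv\bar{\mathfrak{K}}_{\scriptscriptstyle\alpha}\punt\beta\punt(-1\punt\bar{\mathfrak{K}}_{\scriptscriptstyle\alpha})_{\scriptscriptstyle D}^{\scriptscriptstyle<-1>}$ from Theorem~\ref{thm f}, the identity produces
\[
\bar\alpha^n \simeq \bar{\mathfrak{K}}_{\scriptscriptstyle\alpha}\bigl(\bar{\mathfrak{K}}_{\scriptscriptstyle\alpha} - n\punt(-1\punt\bar{\mathfrak{K}}_{\scriptscriptstyle\alpha})\bigr)^{n-1}.
\]
The last step is sign bookkeeping: by the Abel-polynomial convention, $-n\punt\xi$ denotes the inverse umbra of $n\punt\xi$, i.e., the umbra with generating function $f(\xi,t)^{-n}$. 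Taking $\xi=-1\punt\bar{\mathfrak{K}}_{\scriptscriptstyle\alpha}$, we have $f(\xi,t)^{-n}=R(t)^n=f(n\punt\bar{\mathfrak{K}}_{\scriptscriptstyle\alpha},t)$, so $-n\punt(-1\punt\bar{\mathfrak{K}}_{\scriptscriptstyle\alpha})\equiv n\punt\bar{\mathfrak{K}}_{\scriptscriptstyle\alpha}$, and substituting yields the first similarity.

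The main obstacle is the umbral Abel--Lagrange identity itself. If~\cite{DiNaSen3} does not state it in this exact form, I would prove it directly: the generating function of the left-hand side is $f(\gamma,\,f(\delta_{\scriptscriptstyle D}^{\scriptscriptstyle<-1>},t)-1)$, and classical Lagrange inversion for $t\mapsto t\,f(\delta,t)$, combined with the composition-umbra moment formula~(\ref{def:comp mom}), identifies its coefficients with those of $A_n(\gamma,\delta)$ expanded multinomially. A secondary care is the Abel-polynomial sign convention, which must be applied carefully in the first similarity to convert $-n\punt(-1\punt\bar{\mathfrak{K}}_{\scriptscriptstyle\alpha})$ into $n\punt\bar{\mathfrak{K}}_{\scriptscriptstyle\alpha}$.
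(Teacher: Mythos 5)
Your proposal is correct and follows essentially the same route as the paper: the key identity $A_n(x,\delta)\simeq(x\punt\beta\punt\delta_{\scriptscriptstyle D}^{\scriptscriptstyle<-1>})^n$ quoted from~\cite{DiNaSen3}, applied once with $\gamma=\delta=\bar{\alpha}$ and once with $\gamma=\bar{\mathfrak{K}}_{\scriptscriptstyle\alpha}$, $\delta=-1\punt\bar{\mathfrak{K}}_{\scriptscriptstyle\alpha}$, using the two similarities of Theorem~\ref{thm f} and the same sign conversion $-n\punt(-1\punt\bar{\mathfrak{K}}_{\scriptscriptstyle\alpha})\equiv n\punt\bar{\mathfrak{K}}_{\scriptscriptstyle\alpha}$. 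The only difference is your explicit fallback derivation of the Abel--Lagrange identity via Lagrange inversion, which the paper omits by citing~\cite{DiNaSen3}.
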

\begin{proof}
In \cite{DiNaSen3}, the following equivalence $A_n(x, \alpha) \simeq (x \punt
\beta \punt \alpha_{\scriptscriptstyle D}^{\scriptscriptstyle <-1>})^n,$ is proved
for all $n \geq 1,$ so that $A_n({\bar{\mathfrak{K}}}_{\scriptscriptstyle \alpha},- 1 \punt
{\bar{\mathfrak{K}}}_{\scriptscriptstyle \alpha}) \simeq
[{\bar{\mathfrak{K}}}_{\scriptscriptstyle \alpha} \punt \beta \punt (- 1 \punt
{\bar{\mathfrak{K}}}_{\scriptscriptstyle \alpha})_{\scriptscriptstyle D}^{\scriptscriptstyle <-1>}]^n.$
From the latter similarity in Theorem~\ref{thm f}, we have $\bar{\alpha}^n
\simeq  A_n({\bar{\mathfrak{K}}}_{\scriptscriptstyle \alpha},- 1 \punt
{\bar{\mathfrak{K}}}_{\scriptscriptstyle \alpha}) \simeq
{\bar{\mathfrak{K}}}_{\scriptscriptstyle \alpha} (
{\bar{\mathfrak{K}}}_{\scriptscriptstyle \alpha}- n \punt (-1 \punt
{\bar{\mathfrak{K}}}_{\scriptscriptstyle \alpha}))^{n-1}$ by which the
former equivalence (\ref{(mom)}) follows. From the latter
similarity of Theorem~\ref{thm f}, we have ${\bar{\mathfrak{K}}}_{\scriptscriptstyle \alpha}^n
\simeq (\bar{\alpha} \punt \beta \punt {\bar{\alpha}_{\scriptscriptstyle D}^{\scriptscriptstyle <-1>}})^n
\simeq A_n(\bar{\alpha}, \bar{\alpha}).$ The latter equivalence (\ref{(mom)}) follows by
replacing $x$ with $\bar{\alpha}$ in (\ref{Abel}).
\end{proof}
\begin{cor} \label{cor1}
With $\{ r_n \}_{n \geq 1}$ and $\{ m_n \}_ {n \geq 1}$ given in (\ref{def:free cum
gen fun}), we have $ m_n = \sum_{\lambda\vdash n} (n)_{\ell(\lambda)-1} r_{\lambda} / m(\lambda)!$
and $r_n = \sum_{\lambda\vdash n}  (-n)_{\ell(\lambda)-1} m_{\lambda}/m(\lambda)!.$
\end{cor}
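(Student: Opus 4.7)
The plan is to deduce both identities from the umbral parametrizations in Theorem~\ref{thm:par free} by a direct binomial expansion and a regrouping according to partitions of $n$. I sketch the argument for $m_n$; the formula for $r_n$ follows in parallel.

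First, I would apply the binomial theorem to the parametrization $\bar{\alpha}^n \simeq \bar{\mathfrak{K}}_\alpha(\bar{\mathfrak{K}}_\alpha + n\punt\bar{\mathfrak{K}}_\alpha)^{n-1}$, treating $\bar{\mathfrak{K}}_\alpha$ and $n\punt\bar{\mathfrak{K}}_\alpha$ as uncorrelated (the dot operation produces $n$ fresh copies of $\bar{\mathfrak{K}}_\alpha$, independent of the outer factor). I would then evaluate each term: the bar convention gives $\bar{\alpha}^n = n!\,m_n$ and $\bar{\mathfrak{K}}_\alpha^{k+1} = (k+1)!\,r_{k+1}$, while (\ref{def:dot mom}) applied to $n\punt\bar{\mathfrak{K}}_\alpha$ (with $\gamma=n$) yields, after evaluating the distinct copies in $(\bar{\mathfrak{K}}_\alpha)_\mu$ as $\prod_j \mu_j!\,r_{\mu_j}$, the compact form
\[ E[(n\punt\bar{\mathfrak{K}}_\alpha)^m] = m!\sum_{\mu\vdash m}\frac{(n)_{\ell(\mu)}}{m(\mu)!}\,r_\mu. \]
After substituting and using $\binom{n-1}{k}(k+1)!(n-1-k)! = (n-1)!(k+1)$, an overall $n!$ cancels; reindexing with $s=k+1$, the identity reduces to
\[ m_n = \frac{1}{n}\sum_{s=1}^{n} s\,r_s\sum_{\mu\vdash n-s}\frac{(n)_{\ell(\mu)}}{m(\mu)!}\,r_\mu. \]

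Then I would regroup this double sum by the partition $\lambda\vdash n$ obtained by adjoining the part $s$ to $\mu$. For fixed $\lambda$, each distinct part size $s$ present in $\lambda$ gives one decomposition $(s,\mu)$, with $\ell(\mu)=\ell(\lambda)-1$, $m(\mu)! = m(\lambda)!/m_s(\lambda)$, and $r_s r_\mu = r_\lambda$. The total coefficient of $r_\lambda$ becomes $[(n)_{\ell(\lambda)-1}/m(\lambda)!]\cdot[\sum_s s\,m_s(\lambda)/n]$, and the elementary identity $\sum_s s\,m_s(\lambda) = n$ (the parts of $\lambda$ sum to $n$) collapses the bracket to $1$, yielding the stated formula for $m_n$.

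The dual formula $r_n = \sum_\lambda(-n)_{\ell(\lambda)-1}m_\lambda/m(\lambda)!$ follows by the same steps applied to $\bar{\mathfrak{K}}_\alpha^n \simeq \bar{\alpha}(\bar{\alpha} - n\punt\bar{\alpha})^{n-1}$, using that $-n\punt\bar{\alpha}$ denotes the umbral inverse of $n\punt\bar{\alpha}$ (with generating function $f(\bar{\alpha},t)^{-n}$); the dot moment formula extends consistently to this case with $(-n)_{\ell(\mu)} = (-n)(-n-1)\cdots(-n-\ell(\mu)+1)$ in place of $(n)_{\ell(\mu)}$. The main obstacle is really just the bookkeeping: keeping track of the $n!$ factors from the bar convention, correctly applying uncorrelation between $\bar{\mathfrak{K}}_\alpha$ (resp.\ $\bar{\alpha}$) and its dotted copies, and pairing the multinomial weights from (\ref{def:dot mom}) with the partition statistics. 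Everything collapses cleanly thanks to the identity $\sum_s s\,m_s(\lambda) = n$, which is what makes the closed form in the statement possible.
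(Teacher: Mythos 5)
Your proof is correct: the binomial expansion of $\bar{\K}_{\scriptscriptstyle\alpha}(\bar{\K}_{\scriptscriptstyle\alpha}+n\punt\bar{\K}_{\scriptscriptstyle\alpha})^{n-1}$ with the outer and inner $\bar{\K}_{\scriptscriptstyle\alpha}$ identified and the dotted copies uncorrelated, the evaluation $E[(n\punt\bar{\K}_{\scriptscriptstyle\alpha})^m]=m!\sum_{\mu\vdash m}(n)_{\ell(\mu)}r_\mu/m(\mu)!$ via (\ref{def:dot mom}), the cancellation $\binom{n-1}{k}(k+1)!(n-1-k)!=(n-1)!(k+1)$, and the regrouping by $\lambda=\mu\cup\{s\}$ using $m(\mu)!=m(\lambda)!/m_s(\lambda)$ and $\sum_s s\,m_s(\lambda)=n$ all check out, and the signed case goes through verbatim with $(-n)_{\ell(\mu)}$ (exactly as the paper itself uses $(-1)_{\ell(\lambda)}$ in the proof of Theorem~\ref{par1}). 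The route differs from the paper's only in where the combinatorial work is done: the authors intend the corollary to fall out in one line from Theorem~\ref{thm:par free} together with the already-available partition expansions of Abel-type umbral polynomials, namely (\ref{def:vol pol 2}) for the $m_n$ formula and (\ref{lemma0}) with $g_n=n$ for the $r_n$ formula, after observing that $d_\lambda E[\bar{\K}_\lambda]=(n!/m(\lambda)!)\,r_\lambda$ cancels the $n!$ coming from the bar convention; both expansions are quoted from Petrullo--Senato rather than proved. What you have done is rederive those expansions inline by the binomial theorem and a part-adjoining bijection. This makes the corollary self-contained (a genuine gain, since (\ref{lemma0}) is only cited), at the cost of redoing bookkeeping that the Abel-polynomial lemma packages once for all values of $g_n$; your argument is in effect a proof of the $g_n=n$ instance of (\ref{lemma0}) and could usefully be stated as such.
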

The Abel parametrization allows us to prove the homogeneity property of the
free cumulant umbra, since for any $c \in R$ and for any $\alpha \in A$
we have $-n \punt (c \alpha) \equiv c (- n \punt \alpha),$ see~\cite{DiNaSen1}.
\begin{thm} [Homogeneity property] \label{thm:hom}
If $\K_{\scriptscriptstyle\alpha}$ is the free cumulant
umbra of $\alpha$, then we have $\K_{\scriptscriptstyle c \alpha}\equiv c\K_{\scriptscriptstyle\alpha},$
for all $c\in R$.
\end{thm}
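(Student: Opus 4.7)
The plan is to deduce the homogeneity directly from the Abel parametrization of the free cumulant umbra, in the same spirit as the proofs of Theorem~\ref{id:hom boo} and the classical homogeneity in Theorem~\ref{thm:cum umb}. The parametrization (the latter equivalence in (\ref{(mom)})) reduces the question to an identity involving only $\bar{\alpha}$, the scalar $c$, and the dot operation, all of which interact transparently under multiplication by $c$.

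First, apply Theorem~\ref{thm:par free} to the umbra $c\alpha$ to get
\begin{equation*}
\bar{\mathfrak{K}}_{\scriptscriptstyle c\alpha}^n \simeq \overline{c\alpha}\,(\overline{c\alpha} - n\punt\overline{c\alpha})^{n-1}.
\end{equation*}
Next observe that $\overline{c\alpha}\equiv c\bar{\alpha}$: indeed $\bar{\alpha}\equiv\bar{u}\,\alpha$ and the scalar $c\in R$ commutes with everything under the evaluation $E$, so $\overline{c\alpha}\equiv \bar{u}(c\alpha)\equiv c(\bar{u}\alpha)\equiv c\bar{\alpha}$. Combined with the hinted identity $-n\punt(c\alpha)\equiv c(-n\punt\alpha)$, which (via the same argument used in the passage for the negative case) gives also $n\punt(c\bar{\alpha})\equiv c(n\punt\bar{\alpha})$, the displayed expression becomes
\begin{equation*}
\bar{\mathfrak{K}}_{\scriptscriptstyle c\alpha}^n \simeq c\bar{\alpha}\,(c\bar{\alpha} - c(n\punt\bar{\alpha}))^{n-1} \simeq c^n\,\bar{\alpha}\,(\bar{\alpha} - n\punt\bar{\alpha})^{n-1}.
\end{equation*}

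Applying Theorem~\ref{thm:par free} once more, this time to $\alpha$ itself, the right-hand side equals $c^n\,\bar{\mathfrak{K}}_{\scriptscriptstyle\alpha}^n \simeq (c\bar{\mathfrak{K}}_{\scriptscriptstyle\alpha})^n$. Hence $\bar{\mathfrak{K}}_{\scriptscriptstyle c\alpha}\equiv c\bar{\mathfrak{K}}_{\scriptscriptstyle\alpha}$, and since $\bar{\xi}\equiv\bar{\zeta}$ if and only if $\xi\equiv\zeta$ (noted in the discussion preceding Theorem~\ref{thm fund 2}), we conclude $\mathfrak{K}_{\scriptscriptstyle c\alpha}\equiv c\mathfrak{K}_{\scriptscriptstyle\alpha}$.

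The only real subtlety is step two, namely the commutation of the dot operation with scalar multiplication. But this is automatic from the construction of $n\punt\bar{\alpha}$ as a sum of similar uncorrelated copies of $\bar{\alpha}$, and the hint about $-n\punt(c\alpha)\equiv c(-n\punt\alpha)$ makes clear that the analogous statement for positive dot powers is available in the umbral calculus developed earlier. So there is no genuine obstacle; the result is a one-line consequence of the Abel parametrization once the scalar $c$ is pulled through the dot operation.
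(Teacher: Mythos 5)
Your proof is correct and is exactly the argument the paper intends: the paper gives no written proof but states just before the theorem that the Abel parametrization of Theorem~\ref{thm:par free} together with $-n\punt(c\alpha)\equiv c(-n\punt\alpha)$ yields homogeneity, which is precisely the computation you carry out. Your write-up simply makes explicit the steps ($\overline{c\alpha}\equiv c\bar{\alpha}$, pulling $c^n$ out of the Abel polynomial, and passing from $\bar{\K}_{c\alpha}\equiv c\bar{\K}_{\alpha}$ to $\K_{c\alpha}\equiv c\K_{\alpha}$) that the authors leave to the reader.
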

In order to prove the additivity property of the free cumulant umbra we
introduce an umbra $\delta_{\scriptscriptstyle P}$ such that
$(\delta_{\scriptscriptstyle P})^n\simeq \delta^{n+1} / (n+1)$
for $n=1,2,\ldots.$ Thanks to this device, Definition~\ref{free}
gives
$\bar{\K}_{\scriptscriptstyle \alpha}\equiv-1 \punt {(\ab)}_{\scriptscriptstyle P}.$
Denote by $\LL_{\bar{\alpha}}$ the umbra ${(\ab)}_{\scriptscriptstyle P}.$

Consider the multiplicative function $\mathfrak{f}$ on the noncrossing partition lattice defined by $\alpha^{n-1}\simeq f_n$. Note that $\mathfrak{f}$ is unital, that is $f_1=1$. The generating function $f(\LL_{\bar{\alpha}},t)$ is exactly the Fourier transform $(\mathcal{F}\mathfrak{f})(t)$ considered by Nica and Spei\-cher~\cite{NicSpe}. In particular, being $[\mathcal{F}(\mathfrak{f}\ast \mathfrak{g})](t)=(\mathcal{F}\mathfrak{f})(t)(\mathcal{F}\mathfrak{g})(t)$ for all $\mathfrak{f}$ and $\mathfrak{g}$ unital, if $\gamma^{n-1}\simeq g_n$ and $\omega^{n-1}\simeq h_n$, then we obtain $\mathfrak{h}=\mathfrak{f}\ast \mathfrak{g}\Leftrightarrow\LL_{\bar{\omega}}\equiv\LL_{\bar{\alpha}}+\LL_{\bar{\gamma}}$. This way, an analog of Theorem~\ref{thm fund 1} and Theorem~\ref{thm fund 2} for unital multiplicative functions on the noncrossing partitions lattice is given.
\begin{thm} [Additivity property] \label{thm:add free}
If $\K_{\scriptscriptstyle\alpha}, \K_{\scriptscriptstyle\gamma} $
and $\K_{\scriptscriptstyle\xi}$ are the free cumulant
umbrae of $\alpha, \gamma$ and $\xi$ respectively, then
\begin{equation}\label{free conv}
\K_{\scriptscriptstyle\xi} \equiv \K_{\scriptscriptstyle\alpha}
\stackrel{\punt}{+} \K_{\scriptscriptstyle\gamma} \Leftrightarrow
-1 \punt \LL_{\bar{\xi}} \equiv -1 \punt \LL_{\bar{\alpha}} \stackrel{\punt}{+}
-1 \punt \LL_{\bar{\gamma}}.
\end{equation}
\end{thm}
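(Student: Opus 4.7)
The plan is to reduce the biconditional to the essentially tautological fact that a similarity of the form $\zeta_0 \equiv \zeta_1 \stackrel{\punt}{+} \zeta_2$ is preserved (up to an $n!$-rescaling of moments) by the ``bar'' operation $\alpha \mapsto \bar{\alpha} \equiv \bar{u}\alpha$. The bridge between the two sides of~\eqref{free conv} is the identity $\bar{\K}_{\scriptscriptstyle\alpha} \equiv -1 \punt \LL_{\bar{\alpha}}$ stated immediately before the theorem; it is obtained from Definition~\ref{free} by applying the operation $P$ to both sides of $(-1 \punt \bar{\K}_{\scriptscriptstyle\alpha})_{\scriptscriptstyle D} \equiv \bar{\alpha}_{\scriptscriptstyle D}^{\scriptscriptstyle <-1>}$ and using $(\zeta_{\scriptscriptstyle D})_{\scriptscriptstyle P} \equiv \zeta$.

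First I would record the auxiliary lemma that $\overline{\zeta_1 \stackrel{\punt}{+} \zeta_2} \equiv \bar{\zeta}_1 \stackrel{\punt}{+} \bar{\zeta}_2$ for any umbrae $\zeta_1, \zeta_2$: both sides have $n$th moment equal to $n!(\zeta_1^n + \zeta_2^n)$, since $E[\bar{u}^n] = n!$ and $\bar{u}$ is uncorrelated with everything. Combined with the fact (noted earlier in the paper) that $\alpha \equiv \gamma$ if and only if $\bar{\alpha} \equiv \bar{\gamma}$, the lemma yields
\[
\K_{\scriptscriptstyle\xi} \equiv \K_{\scriptscriptstyle\alpha} \stackrel{\punt}{+} \K_{\scriptscriptstyle\gamma} \iff \bar{\K}_{\scriptscriptstyle\xi} \equiv \bar{\K}_{\scriptscriptstyle\alpha} \stackrel{\punt}{+} \bar{\K}_{\scriptscriptstyle\gamma}.
\]
Substituting $\bar{\K}_{\scriptscriptstyle\alpha} \equiv -1 \punt \LL_{\bar{\alpha}}$ and the analogous identities for $\gamma$ and $\xi$ into the right-hand similarity yields precisely the right-hand side of~\eqref{free conv}, so the two conditions are equivalent in both directions.

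The main conceptual point---rather than a genuine obstacle---is that the umbra $\LL_{\bar{\alpha}}$ has been introduced precisely so that the bridge $\bar{\K}_{\scriptscriptstyle\alpha} \equiv -1 \punt \LL_{\bar{\alpha}}$ between free cumulants and the compositional inverse of the moment generating function is built in. As a result, unlike the boolean additivity Theorem~\ref{thm:add bool}, where the composition identity $-1 \punt \bar{\alpha} \equiv -\chi \punt \beta \punt \bar{\eta}_{\scriptscriptstyle\alpha}$ had to be deployed to translate between moments and cumulants, here the only real content is the bar-preservation lemma above; the remainder is symbolic bookkeeping.
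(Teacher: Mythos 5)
Your proof is correct and follows exactly the route the paper intends: the paper states no explicit proof for this theorem, but introduces the umbra $\LL_{\bar{\alpha}}$ and the bridge $\bar{\K}_{\scriptscriptstyle\alpha}\equiv -1\punt\LL_{\bar{\alpha}}$ immediately beforehand precisely so that \eqref{free conv} reduces to $\bar{\K}_{\scriptscriptstyle\xi}\equiv\bar{\K}_{\scriptscriptstyle\alpha}\stackrel{\punt}{+}\bar{\K}_{\scriptscriptstyle\gamma}$, which is equivalent to the unbarred similarity by the $n!$-rescaling of moments. Your explicit bar-preservation lemma and the derivation of the bridge via $(\zeta_{\scriptscriptstyle D})_{\scriptscriptstyle P}\equiv\zeta$ supply exactly the bookkeeping the paper leaves implicit.
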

\begin{rem} [Connection between boolean and free convolution]
Write $\bar{\alpha}\dip$ for $(\ab)_{\scriptscriptstyle P}$. By
virtue of Theorem~\ref{thm:add free}, the \textit{free
convolution} $\alpha{\free}\gamma$ of $\alpha$ and $\gamma$ has to
be defined by
$\overline{\alpha{\free}\gamma}\dip\equiv-1\punt[-1\punt\bar{\alpha}\dip\stackrel{\punt}{+}-1\punt\bar{\gamma}\dip],$
so that
$\K_{\scriptscriptstyle\alpha{\free}\gamma}\equiv\K_{\scriptscriptstyle\alpha}{\stackrel{\punt}{+}}
\K_{\scriptscriptstyle\gamma}.$ Moreover, thanks to the umbra
$\LL_{\bar{\alpha}}$ we have
$$\LL_{\scriptscriptstyle\overline{\alpha{\free}\gamma}}\equiv\LL_{\scriptscriptstyle\bar{\alpha}}\uplus\LL_{\scriptscriptstyle\bar{\gamma}},$$
which gives the connection between boolean and free convolution.
\end{rem}
Semi-invariance property can be proved by observing that
$\K_{\scriptscriptstyle\alpha\free
cu}\equiv\K_{\scriptscriptstyle\alpha}
{\stackrel{\punt}{+}}c\K_{\scriptscriptstyle u}$ so that
$\K_{\scriptscriptstyle\alpha\boxplus c
u}\equiv\K_{\scriptscriptstyle\alpha} {\stackrel{\punt}{+}}c
\chi,$ being $\bar{\K}_{\scriptscriptstyle u}\equiv\bar{u}\punt
\beta \punt {\bar{u}_{\scriptscriptstyle D}}^{{\scriptscriptstyle
<-1>}}\equiv\chi$.

\begin{defn} [Catalan umbra] The {\it Catalan umbra} is the unique
umbra $\varsigma$ such that $\K_{\varsigma}\equiv u$,
that is $\bar{\varsigma}\equiv \bar{u} \punt \beta \punt (-1 \punt
\bar{u})_{\scriptscriptstyle D}^{\scriptscriptstyle <-1>}.$
\end{defn}
As it is well known, Catalan numbers count the noncrossing
partitions of a set. So in the free setting, the Catalan umbra plays the same role
played by the Bell umbra $\beta$ in the classical framework.
\begin{prop}[Catalan numbers]
If $\mathcal{C}_n$ is the $n$-th Catalan number, then
$\varsigma^n\simeq \mathcal{C}_n.$
\end{prop}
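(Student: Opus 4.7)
The plan is to derive a functional equation for $f(\bar{\varsigma},t)$ from the definition $\K_{\varsigma} \equiv u$, recognize it as the defining equation for the generating function of Catalan numbers, and then translate the moments of $\bar{\varsigma}$ into those of $\varsigma$.

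First, from $\K_{\varsigma}\equiv u$ it follows immediately that $\bar{\K}_{\varsigma}\equiv \bar{u}$, since $\alpha\equiv\gamma$ iff $\bar{\alpha}\equiv\bar{\gamma}$. Hence the free cumulant generating function of $\varsigma$ is $f(\bar{\K}_{\varsigma},t)=f(\bar{u},t)=1/(1-t)$, that is, every free cumulant $r_n$ of $\varsigma$ equals $1$.

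Next, I would apply Proposition~\ref{def2}, which (as explained just after its proof) translates into the scalar generating function identity (\ref{def:free cum gen fun}): setting $M(t)=f(\bar{\varsigma},t)$ and $R(t)=f(\bar{\K}_{\varsigma},t)=1/(1-t)$, we obtain
\begin{equation*}
M(t)=R\bigl(tM(t)\bigr)=\frac{1}{1-tM(t)},
\end{equation*}
equivalently $tM(t)^2-M(t)+1=0$. The branch satisfying $M(0)=1$ is
\begin{equation*}
M(t)=\frac{1-\sqrt{1-4t}}{2t}=\sum_{n\geq 0}\mathcal{C}_n\,t^n,
\end{equation*}
the ordinary generating function of the Catalan numbers.

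Finally, since $f(\bar{\varsigma},t)=1+\sum_{n\geq 1}E[\bar{\varsigma}^n]\,t^n/n!$ by the convention recalled at the start of Section~3, comparing coefficients yields $E[\bar{\varsigma}^n]=n!\,\mathcal{C}_n$. Because $\bar{\varsigma}\equiv \bar{u}\varsigma$ with $\bar{u}$ and $\varsigma$ uncorrelated, the multiplicativity of the evaluation gives $E[\bar{\varsigma}^n]=E[\bar{u}^n]\,E[\varsigma^n]=n!\,E[\varsigma^n]$, and the identification $\varsigma^n\simeq\mathcal{C}_n$ follows at once. No step is really difficult; the only place to be careful is in distinguishing the ordinary-type generating function convention for barred umbrae from the exponential-type one for ordinary umbrae when extracting moments, which is exactly what converts $n!\,\mathcal{C}_n$ back to $\mathcal{C}_n$.

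A purely umbral variant of the same argument, avoiding generating functions, is to use the Abel parametrization (\ref{(mom)}): with $\bar{\K}_{\varsigma}\equiv\bar{u}$ it gives $\bar{\varsigma}^n\simeq\bar{u}(\bar{u}+n\punt\bar{u})^{n-1}$, whose evaluation can be computed from $f(n\punt\bar{u},t)=(1-t)^{-n}$ and again recovered as $n!\,\mathcal{C}_n$; I would mention this only as a cross-check, since the generating-function route above is shorter.
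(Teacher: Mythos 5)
Your proof is correct, but it takes a genuinely different route from the paper's. You work at the level of generating functions: from $\K_{\varsigma}\equiv u$ you get $f(\bar{\K}_{\varsigma},t)=(1-t)^{-1}$, feed this into the functional equation $M(t)=R[tM(t)]$ supplied by Proposition~\ref{def2}, solve the resulting quadratic $tM(t)^2-M(t)+1=0$, and recognize the Catalan generating function; the passage from $E[\bar{\varsigma}^n]=n!\,\mathcal{C}_n$ to $E[\varsigma^n]=\mathcal{C}_n$ via uncorrelation of $\bar{u}$ and $\varsigma$ is handled correctly. The paper instead stays combinatorial: it specializes the moment--free-cumulant formula of Corollary~\ref{cor1} (equivalently, the Abel parametrization $\bar{\varsigma}^n\simeq\bar{u}(\bar{u}+n\punt\bar{u})^{n-1}$) to $r_\lambda=1$, obtaining $\varsigma^n\simeq\sum_{\mu\vdash n}(n)_{\ell(\mu)-1}/m(\mu)!$, and then invokes Kreweras' count of noncrossing partitions of shape $\mu$ to conclude that the sum is $|\mathcal{NC}_n|=\mathcal{C}_n$. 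Your argument is more self-contained (it needs only the closed form of the Catalan OGF, not the shape-enumeration of $\mathcal{NC}_n$), while the paper's argument makes visible the point it is driving at, namely that $\varsigma$ is the free-probability analogue of the Bell umbra because its moments literally count noncrossing partitions. The variant you mention at the end as a cross-check is in fact essentially the paper's proof.
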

\begin{proof}
We have $n!\varsigma^n\simeq\bar{\varsigma}^n\simeq n!\sum_{\mu\vdash
n}(n)_{\ell(\mu)-1}/m(\mu)!. $
As well known (see for instance~\cite{Krew}),
$(n)_{\ell(\mu)-1}/m(\mu)!$ is the number of noncrossing
partitions of shape $\mu$  and $|\mathcal{NC}_n|=\mathcal{C}_n$, so
that $\varsigma^n\simeq |\mathcal{NC}_n|=\mathcal{C}_n.$
\end{proof}
%
\section{Volume polynomial}
%
In this section we provide an explicit connection between free
cumulants and parking functions via volume polynomials. Moreover
we prove that in the free setting the volume polynomials play
the same role played by the complete Bell exponential polynomials
in the classical settings.
\par
Recall that a \emph{parking function} of length $n$ is a sequence
$(p_1,p_2,\ldots,p_n)$ of $n$ positive integers, whose
nondecreasing arrangement $(p_{i_1},p_{i_2},\ldots,p_{i_n})$ is such that
$p_{i_j}\leq j$. We denote by $\park(n)$ the set of all parking
functions of length $n$; its cardinality is $(n+1)^{n-1}$. The
symmetric group $\frak{S}_n$ acts on the set $\park(n)$ by
permuting the entries of parking functions. As well known, the
number of orbits in $\park(n)^{\frak{S}_n}$ is equal to the
$n$-th Catalan number $\mathcal{C}_n$. It is also known that a map
$\tau$ can be defined from $\park(n)$ to $\mathcal{NC}_n$ whose
restriction to $\park(n)^{\frak{S}_n}$ is bijective. The $n$-volume
polynomial $V_n(x_1,x_2,\ldots,x_n)$, introduced by Pitman and Stanley~\cite{PitStan}, is the following
homogeneous polynomial of degree $n$:
\begin{equation}\label{def:vol pol 1}
V_n(x_1,x_2,\ldots,x_n)=\frac{1}{n!}\sum_{p\in \park(n)}\!\!\!\!x_p,
\end{equation}
where $x_p=x_{p_1}x_{p_2}\cdots x_{p_n}$ whenever $p=(p_1,p_2, \ldots,p_n)$. For each
$p \in \park(n)$ let
$m(p)=(m_1,m_2,\ldots,m_n)$ be the vector of the multiplicities of
$p$, that is $m_j=|\{i\,|\,p_i=j\}|.$
If $\lambda$ is a partition of $n$, then we say that the parking
function $p$ is of type $\lambda$ if the nonzero entries of $m(p)$
consists of a rearrangement of the parts of $\lambda$. The orbit
$\mathcal{O}_p= \{\omega(p)\,|\,\omega\in\frak{S}_n\}$ of a
parking function of type $\lambda$ has cardinality $n!/\lambda!$.
The map $\tau$ has the following property: $p$ is of type
$\lambda$ if and only if $\tau(p)=\{B_1,B_2,\ldots,B_l\}$ is of shape $\lambda$. Hence, the polynomial $V_n(x_1,x_2,\ldots,x_n)$ can be written as
\begin{equation}\label{def:vol pol 2}
V_n(x_1,x_2,\ldots,x_n)=\sum_{\lambda\vdash
n}\frac{1}{\lambda!}\frac{(n)_{\ell(\lambda)-1}}{m(\lambda)!}x^
{\lambda},
\end{equation}
being $x^{\lambda}=x_1^{\lambda_1}x_2^{\lambda_2}\cdots
x_l^{\lambda_l}$. In particular when $x_i$ are replaced
by similar and uncorrelated umbrae we have $n!V_n(\alpha',\alpha'',\ldots,\alpha''')\simeq\alpha(\alpha+n\punt\alpha)^{n-
1},$ for all $\alpha\in A$ (see~\cite{PetSen}). By using this last
result and Theorem~\ref{thm:par free}, the following theorem provides
an explicit connection between free cumulants and parking functions.
\begin{thm}
Let $\alpha$ be an umbra and let $\K_{\scriptscriptstyle\bar{\alpha}}$
be its free cumulant umbra. If $\K'$, $\K''$,\ldots, $\K'''$
are $n$ uncorrelated umbrae similar to
$\K_{\scriptscriptstyle\alpha}$ and $V_n(x_1,x_2,\ldots,x_n)$ is the
$n$-volume polynomial (\ref{def:vol pol 1}), then
$\bar{\alpha}^n\simeq V_n(\bar{\K}',\bar{\K}',\ldots,\bar{\K}''').$
\end{thm}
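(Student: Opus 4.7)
The plan is to prove this by chaining together two identities that are already in hand: the free parametrization of moments (Theorem~\ref{thm:par free}) and the umbral evaluation of the volume polynomial quoted from~\cite{PetSen} immediately before the theorem, namely
\begin{equation*}
n!\, V_n(\alpha',\alpha'',\ldots,\alpha''') \simeq \alpha(\alpha + n\punt\alpha)^{n-1}
\end{equation*}
for every umbra $\alpha$. Taken together, these two statements already encode everything the theorem claims; the only task is to apply the volume-polynomial identity to the right umbra.

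The first step is to invoke the former equivalence in Theorem~\ref{thm:par free}, which yields
\begin{equation*}
\bar{\alpha}^n \simeq \bar{\K}_{\scriptscriptstyle\alpha}\bigl(\bar{\K}_{\scriptscriptstyle\alpha} + n\punt\bar{\K}_{\scriptscriptstyle\alpha}\bigr)^{n-1}.
\end{equation*}
The next step is to apply the quoted identity from~\cite{PetSen} with the umbra $\alpha$ replaced by the free cumulant umbra $\bar{\K}_{\scriptscriptstyle\alpha}$. Since $\bar{\K}',\bar{\K}'',\ldots,\bar{\K}'''$ play for $\bar{\K}_{\scriptscriptstyle\alpha}$ precisely the role that the similar and uncorrelated copies $\alpha',\alpha'',\ldots,\alpha'''$ play for $\alpha$, this substitution is legitimate inside the evaluation $E$ and produces
\begin{equation*}
n!\, V_n(\bar{\K}',\bar{\K}'',\ldots,\bar{\K}''') \simeq \bar{\K}_{\scriptscriptstyle\alpha}\bigl(\bar{\K}_{\scriptscriptstyle\alpha} + n\punt\bar{\K}_{\scriptscriptstyle\alpha}\bigr)^{n-1}.
\end{equation*}
Combining the two equivalences delivers the claim (up to the normalization convention used for $V_n$; one may alternatively verify the formula by expanding both sides with the aid of (\ref{def:vol pol 2}) and Corollary~\ref{cor1}, where the cancellation $\bar{\K}^{\lambda}\simeq\lambda!\,r_{\lambda}$ brought by the bar umbra exactly matches the factor $1/\lambda!$ appearing in the volume polynomial).

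There is no genuine obstacle to this argument: the combinatorial content — the enumeration of parking functions by type and its compatibility with the noncrossing partition count $(n)_{\ell(\lambda)-1}/m(\lambda)!$ — is already packaged inside the~\cite{PetSen} identity, and the free-cumulant combinatorics is already packaged inside Theorem~\ref{thm:par free}. The one point worth stating explicitly in the write-up is the legitimacy of the substitution $\alpha\leftarrow\bar{\K}_{\scriptscriptstyle\alpha}$: the cited volume-polynomial identity is a relation in $R[X][A]$ modulo $\simeq$ that is insensitive to which umbra plays the role of $\alpha$, so relabelling the similar and uncorrelated family $\alpha',\ldots,\alpha'''$ as $\bar{\K}',\ldots,\bar{\K}'''$ preserves umbral equivalence.
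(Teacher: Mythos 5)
Your argument is exactly the one the paper intends: the theorem is stated immediately after the quoted identity $n!\,V_n(\alpha',\alpha'',\ldots,\alpha''')\simeq\alpha(\alpha+n\punt\alpha)^{n-1}$ together with the remark that the result follows from that identity and Theorem~\ref{thm:par free}, which is precisely your chain of two equivalences with $\alpha$ replaced by $\bar{\K}_{\scriptscriptstyle\alpha}$. You are also right to flag the normalization: the literal chaining yields $\bar{\alpha}^n\simeq n!\,V_n(\bar{\K}',\ldots,\bar{\K}''')$, and the direct expansion via (\ref{def:vol pol 2}) and Corollary~\ref{cor1} gives $E[V_n(\bar{\K}',\ldots,\bar{\K}''')]=m_n=E[\alpha^n]$ rather than $n!\,m_n$, so the $n!$ discrepancy lies in the paper's statement (and its corollary), not in your proof.
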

\begin{cor}
If $\varsigma$ is the Catalan umbra and $u',u'',\ldots,u'''$ are
uncorrelated umbrae similar to the unity $u$, then $\bar{\varsigma}^n\simeq V_n(\bar{u}',\bar{u}'',\ldots,\bar{u}'''),$
or equivalently $n!\mathcal{C}_n = E[\bar{u}(\bar{u}+n\punt\bar{u})^{n-1}].$
\end{cor}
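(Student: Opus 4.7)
The plan is to specialize the preceding theorem to $\alpha = \varsigma$. By definition of the Catalan umbra, $\K_\varsigma \equiv u$, so any $n$ uncorrelated umbrae $\K', \K'', \ldots, \K'''$ similar to $\K_\varsigma$ are, up to similarity, $n$ uncorrelated copies $u', u'', \ldots, u'''$ of the unity umbra. Consequently $\bar{\K}' \equiv \bar{u}',\, \bar{\K}'' \equiv \bar{u}'',\, \ldots,\, \bar{\K}''' \equiv \bar{u}'''$, and the preceding theorem specializes immediately to $\bar\varsigma^n \simeq V_n(\bar u', \bar u'', \ldots, \bar u''')$, which is the first (umbral) assertion.

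For the equivalent scalar identity, the cleanest route I would take is to invoke Theorem~\ref{thm:par free} (free parametrization) directly, rather than unpack $V_n$. With $\alpha = \varsigma$ and $\bar{\K}_\varsigma \equiv \bar u$, the Abel parametrization $\bar\alpha^n \simeq \bar{\K}_\alpha(\bar{\K}_\alpha + n \punt \bar{\K}_\alpha)^{n-1}$ becomes $\bar\varsigma^n \simeq \bar u(\bar u + n \punt \bar u)^{n-1}$. Applying the evaluation $E$ to both sides and invoking the Catalan Numbers proposition — whose proof already exhibits $\bar\varsigma^n \simeq n!\,\varsigma^n$ together with $\varsigma^n \simeq \mathcal{C}_n$, so that $E[\bar\varsigma^n] = n!\,\mathcal{C}_n$ — yields the claimed identity $n!\,\mathcal{C}_n = E[\bar u(\bar u + n \punt \bar u)^{n-1}]$.

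Alternatively, one can derive the scalar form straight from the first (umbral) assertion by applying the umbral volume-polynomial identity $n!\, V_n(\alpha', \alpha'', \ldots, \alpha''') \simeq \alpha(\alpha + n \punt \alpha)^{n-1}$, quoted just before the theorem, with $\alpha$ specialized to $\bar u$; this rewrites $V_n(\bar u', \bar u'', \ldots, \bar u''')$ in Abel form before evaluation and brings us back to the same scalar identity.

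No serious obstacle is involved: the content of the corollary is essentially to recognize $\varsigma$ as the umbra whose free cumulants are all $u$ and to substitute into the preceding theorem. The only bookkeeping that requires care is the bar-conversion — in particular the factor $n!$ arising from $\bar u^n \simeq n!$, which is what distinguishes $E[\varsigma^n] = \mathcal{C}_n$ from $E[\bar\varsigma^n] = n!\,\mathcal{C}_n$ and places the Catalan numbers with the correct multiplier on the right-hand side.
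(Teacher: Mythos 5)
The paper gives no explicit proof of this corollary, and your route is exactly the intended one: specialize the preceding theorem using $\K_{\varsigma}\equiv u$ (hence $\bar{\K}_{\varsigma}\equiv\bar{u}$), and get the scalar identity from the free parametrization $\bar{\varsigma}^n\simeq\bar{u}(\bar{u}+n\punt\bar{u})^{n-1}$ together with $E[\bar{\varsigma}^n]=n!\,E[\varsigma^n]=n!\,\mathcal{C}_n$. One caution about your closing claim that the alternative route ``brings us back to the same scalar identity'': combining the quoted identity $n!\,V_n(\alpha',\ldots,\alpha''')\simeq\alpha(\alpha+n\punt\alpha)^{n-1}$ with the first assertion as literally written would give $n!\cdot n!\,\mathcal{C}_n$ on the right, not $n!\,\mathcal{C}_n$; indeed $E[V_n(\bar{u}',\ldots,\bar{u}''')]=\sum_{\lambda\vdash n}(n)_{\ell(\lambda)-1}/m(\lambda)!=\mathcal{C}_n$, so the two assertions of the corollary differ by a factor $n!$ (the umbral one should read $\varsigma^n\simeq V_n(\bar{u}',\ldots,\bar{u}''')$ or equivalently $\bar{\varsigma}^n\simeq n!\,V_n(\bar{u}',\ldots,\bar{u}''')$). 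This normalization slip is inherited from the statement of the preceding theorem rather than introduced by you, and your direct derivation of the scalar identity via the free parametrization is the correct one; but a careful write-up should note the discrepancy rather than assert that the two routes agree.
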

Observe that, from \eqref{def:vol pol 1} we have $n!V_n(x_1,x_2,\ldots,x_n)=\sum_{p\in\park(n)}x_p$. If we restrict the sum to the quotient $\park(n)^{\frak{S}_n}$ (i.e. if we take only a parking function per orbit) we obtain polynomials $R_n(x_1,x_2,\ldots,x_n)=\sum_{\lambda\vdash n}(n)_{\ell(\lambda)-1}x_p/m(\lambda)!$ such that $R_n(\K',\K'',\ldots,\K''')\simeq m_n$. Thanks to the parametrization given in Theorem~\ref{par1} and Theorem~\ref{par2} we can also construct polynomials $C_n(x_1,x_2,\ldots,x_n)$ and
$H_n(x_1,x_2,\ldots,x_n)$ such that $m_n = C_n(\kappa',\kappa'',\ldots,\kappa''')=H_n(\eta',\eta'',\ldots,\eta''')$, $\kappa',\kappa'',\ldots,\kappa'''$ and $\eta',\eta'',\ldots,\eta'''$ being
uncorrelated umbrae similar to $\kappa_{\scriptscriptstyle\alpha}$ and $\eta_{\scriptscriptstyle\alpha}$ respectively. This will be done in the next section for a more general class of cumulants.

Finally, since we have $E[C_n(\kappa',\kappa'',\ldots,\kappa''')]=Y_n(c_1,c_2,\ldots,c_n)$, then
the analog of the complete Bell polynomials in the boolean and free case are the polynomials
$E[H_n(\eta',\eta'',\ldots,\eta''')]$ and $E[R_n(\K',\K'',\ldots,\K''')]$  respectively.
%
\section{Linear cumulants and Abel polynomials}
%
Let $\{g_n\}_{n \geq 1}$ be a sequence of nonnegative integers represented
by an umbra $\gamma.$ Let us define
the \textit{generalized Abel polynomials} as the umbral polynomials
$A_n^{(\gamma)}(\delta,\alpha)$ such that $A_n^{(\gamma)}(\delta,\alpha) \simeq
\delta ( \delta - g_n \punt \alpha)^{n-1}$ for $n \geq 1.$ In particular, when $\alpha\equiv\delta$
we will write $A_n^{(\gamma)}(\alpha)$ instead of $A_n^{(\gamma)}(\alpha,\alpha)$.
It can be shown that (see \cite{PetSen}, Theorem~3.1)
\begin{equation}\label{lemma0}
A_n^{(\gamma)}(\alpha) \simeq \sum_{\lambda\vdash n}\mathrm{d}_{\lambda}(-g_n)_{\ell(\lambda)-1}(\alpha')^{\lambda_1}(\alpha'')^{\lambda_2}\cdots(\alpha''')^{\lambda_{\ell(\lambda)}}.
\end{equation}
Generalized Abel polynomials allow us to express classical, boolean and free cumulants
in terms of moments. Indeed for the classical cumulants from Theorem~\ref{par1} we have $\kappa_{\scriptscriptstyle
\alpha}^n \simeq A_n^{(u)}(\alpha),$ since the sequence $\{1\}_{n \geq 1}$
is represented by the unity umbra $u.$ Since the sequence $\{2\}_{n \geq 1}$
is represented by the umbra $2\punt u,$ from Theorem~\ref{par2}
we have $ \bar{\eta}_{\alpha}^n \simeq A_n^{(2\punt u)}(\bar{\alpha})$
for the boolean cumulants. Since the
sequence $\{n\}_{n \geq 1}$ is represented by the umbra $u_{\scriptscriptstyle D}$,
from Theorem~\ref{thm:par free}  we have ${\bar{\mathfrak{K}}}_{\scriptscriptstyle
\alpha}^n \simeq A_n^{(u_{\scriptscriptstyle D})}(\bar{\alpha})$ for the free cumulants.
In this section, by using generalized Abel polynomials, we show how to construct a more general family of
cumulants possessing the additivity, homogeneity and semi-invariance properties. To the best of our knowledge,
a previous attempt to give a unifying approach to cumulants families was given in
\cite{Ans}, but the boolean case seems not fit in.
\begin{defn}\label{cu}[Cumulant umbrae] The umbra $\K_{\gamma,\alpha}$ such that
$\K_{\gamma,\alpha}^n \simeq A_n^{(\gamma)}(\alpha)$ for all $n \geq 1$
is called the \textit{cumulant umbra} of $\alpha$ induced by the umbra $\gamma.$
\end{defn}
Let ${\bf a}=(a_n)_{n \geq 1}$ and ${\bf g}=(g_n)_{n \geq 1}$ be the sequences of moments
of $\alpha$ and $\gamma$ respectively. Then the $n$-th \textit{cumulant} of $\alpha$ induced by $\gamma$
 is $c_{n}({\bf a};{\bf g}) = E[\K_{\gamma,\alpha}^n].$ If we choose as umbra $\gamma$ the umbra $k\punt u$
and we set $c_{n,k} = E[A_n^{\scriptscriptstyle(k\punt u)}(\alpha)]$, then we may consider the infinite matrix
$$C({\bf a})=\left[\begin{array}{ccc}
c_{1,1}&c_{1,2}&\cdots\\
c_{2,1}&c_{2,2}&\cdots\\
\vdots & \vdots&\ddots
\end{array} \right].$$
The cumulants induced by the umbra $k\punt u$ are the ones occurring in the $k$-th column. But we
can construct different sequences of cumulants of $\alpha$ by extracting one entry from each row of
$C({\bf a}).$ For example, suppose to define the umbra $\gamma$ such that $g_n=(n+k-1),$ for all
$n \geq 1$ and for a fixed positive integer $k.$ The cumulants induced by this umbra $\gamma$ are the
ones occurring in the $k$-th diagonal of $C({\bf a}).$
\par
By means of equivalence (\ref{lemma0}) and Definition \ref{cu}, we have
\begin{equation}
\label{Q}
\K_{\gamma,\alpha}^n \simeq Q_{n}(\gamma; \alpha',\alpha'',\ldots,\alpha''')
\end{equation}
where $Q_n(\gamma;x_1,x_2,\ldots,x_n) = \sum_{\lambda\vdash n}d_{\lambda} (-g_n)_{\ell(\lambda)-1}x_1^{\lambda_1}x_2^{\lambda_2}\cdots x_n^{\lambda_n}$ ($\lambda_i=0$ if $i>\ell(\lambda)$)
are homogeneous polynomial in $R[X]$ of degree $n$ whose coefficients do not depend on $\alpha.$
This property of $Q_{n}(\gamma; x_1,\ldots,x_n)$ gives rise to the following theorem.
\begin{thm}[Homogeneity property] \label{hp}
If $\K_{\gamma,\alpha}$ is the \textit{cumulant umbra} of $\alpha$ induced by the umbra $\gamma,$
then $\K_{\gamma,j\alpha}\equiv j\K_{\gamma,\alpha}$ for all $j\in R.$
\end{thm}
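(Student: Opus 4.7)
The plan is to use the explicit polynomial representation given in equation (Q) immediately before the theorem. Since the theorem asserts a similarity of umbrae, it suffices to show that $\K_{\gamma,j\alpha}^n \simeq (j\K_{\gamma,\alpha})^n$ for every $n \geq 1$, i.e.\ that $\K_{\gamma,j\alpha}^n \simeq j^n\, \K_{\gamma,\alpha}^n$.

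First I would apply Definition \ref{cu} together with (\ref{Q}) to the umbra $j\alpha$, getting
\[
\K_{\gamma,j\alpha}^n \simeq Q_n\bigl(\gamma;\, (j\alpha)',(j\alpha)'',\ldots,(j\alpha)'''\bigr).
\]
Since $j \in R$ is a scalar, $(j\alpha)^{(i)} \equiv j\,\alpha^{(i)}$ for each of the similar and uncorrelated copies, so substituting $j\alpha^{(i)}$ for $x_i$ inside $Q_n$ is legitimate and gives
\[
\K_{\gamma,j\alpha}^n \simeq Q_n\bigl(\gamma;\, j\alpha', j\alpha'',\ldots, j\alpha'''\bigr).
\]

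The key observation, which is the content of the remark preceding the theorem, is that $Q_n(\gamma;x_1,\ldots,x_n)$ is a \emph{homogeneous} polynomial of degree $n$ in $x_1,\ldots,x_n$ whose coefficients involve only $\gamma$ (namely the factor $(-g_n)_{\ell(\lambda)-1}$) and do not depend on $\alpha$. Consequently scaling each $x_i$ by $j$ pulls out a factor $j^n$:
\[
Q_n(\gamma;\, j\alpha',\ldots, j\alpha''') = j^n\, Q_n(\gamma;\, \alpha',\ldots,\alpha''') \simeq j^n\, \K_{\gamma,\alpha}^n.
\]

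Finally, since $j$ is a scalar and commutes with $E$, we have $j^n\,\K_{\gamma,\alpha}^n \simeq (j\K_{\gamma,\alpha})^n$, so $\K_{\gamma,j\alpha}^n \simeq (j\K_{\gamma,\alpha})^n$ for all $n$, which is exactly the similarity $\K_{\gamma,j\alpha} \equiv j\K_{\gamma,\alpha}$. There is no real obstacle here: the substance of the result is packaged in the fact that the dependence on $\alpha$ in (\ref{Q}) enters only through the monomials $x_1^{\lambda_1}\cdots x_n^{\lambda_n}$ of total degree $n$, which is a direct consequence of (\ref{lemma0}); once that is in hand the argument is a one-line homogeneity check.
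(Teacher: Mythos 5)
Your proof is correct and is exactly the argument the paper intends: the paper gives no separate proof, merely remarking that the homogeneity of degree $n$ of $Q_n(\gamma;x_1,\ldots,x_n)$ in (\ref{Q}), whose coefficients depend only on $\gamma$, ``gives rise to'' the theorem, and you have simply written out that one-line check. Nothing is missing.
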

If we set ${\bf j \, a}=(j^n a_n)_{n \geq 1},$ then the homogeneity  property states that
$c_{n}({\bf j \, a};{\bf g}) = j^n c_{n}({\bf a};\bf{g})$ for all $n.$ In terms of the matrix
$C({\bf a})$, the homogeneity  property can be restated as
$C({\bf j  \, a})^T = \hbox{diag}(j,j^2, \cdots) C({\bf a})^T.$
It is also possible to express the moments of $\alpha$ in terms of its cumulants induced by any $\gamma$ with positive integer moments.
\begin{thm}[Invertibility property] \label{In}
For all scalar umbrae $\gamma$ whose moments $\{g_n\}_{n \geq 1}$ are positive integers, there exists a
sequence $\{P_{n}(\gamma;x_1,\ldots,x_n)\}_{n \geq 1}$ of homogeneous umbral polynomials of degree $n$, such that
for all $n$ and for all $\alpha \in A$ we have $\alpha^n \simeq P_{n}(\gamma;\K',\K'',\ldots,\K'''),$
for all $n$-sets $\{\K',\K'',\ldots,\K'''\}$ of umbrae similar to $\K_{\gamma,\alpha}.$
\end{thm}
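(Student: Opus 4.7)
The plan is to exploit the triangular structure of the moment-to-cumulant map produced by (\ref{Q}) and invert it at the scalar level, then turn the resulting scalar formula into an umbral polynomial by means of the uncorrelation property. Writing $a_n = E[\alpha^n]$ and $k_n = E[\K_{\gamma,\alpha}^n]$, Definition \ref{cu} together with (\ref{lemma0}) yields
\begin{equation*}
k_n \;=\; \sum_{\lambda \vdash n} \mathrm{d}_\lambda \, (-g_n)_{\ell(\lambda)-1} \, a_\lambda.
\end{equation*}
Among all partitions $\lambda \vdash n$, only $\lambda=(n)$ has a part equal to $n$, and it contributes $\mathrm{d}_{(n)} (-g_n)_0 \, a_n = a_n$; every other $\lambda$ has parts strictly less than $n$. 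Hence $k_n = a_n + S_n(a_1,\dots,a_{n-1})$, where $S_n$ is a polynomial with coefficients in $\mathbb{Z}[g_1,\dots,g_n]\subset R$.

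From here I would invert recursively. Setting $a_1=k_1$ and, for $n\geq 2$, assuming that $a_1,\dots,a_{n-1}$ have already been written as polynomials in $k_1,\dots,k_{n-1}$ with coefficients in $R$, substitution into $S_n$ produces $a_n = \tilde P_n(\gamma;k_1,\dots,k_n)$ for a polynomial $\tilde P_n$. Because the leading coefficient of $a_n$ in $k_n$ equals $\mathrm{d}_{(n)} (-g_n)_0 = 1$, no division ever occurs and all coefficients remain in $R$. If weight $i$ is assigned to both $a_i$ and $k_i$, then equation (\ref{Q}) is weight-homogeneous of weight $n$; this homogeneity is preserved by the inversion, so $\tilde P_n$ is a linear combination of monomials $c_\mu(\gamma) \, k_{\mu_1} k_{\mu_2} \cdots k_{\mu_{\ell(\mu)}}$ indexed by $\mu \vdash n$.

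The umbralization step is then routine: in $\tilde P_n$ replace each monomial $k_{\mu_1} k_{\mu_2} \cdots k_{\mu_{\ell(\mu)}}$ by $x_1^{\mu_1} x_2^{\mu_2} \cdots x_{\ell(\mu)}^{\mu_{\ell(\mu)}}$ to define $P_n(\gamma; x_1,\dots,x_n)$. Since $\sum \mu_i = n$, every resulting monomial has total degree $n$, so $P_n$ is homogeneous of degree $n$ as required. For any $n$-set $\{\K',\K'',\dots,\K'''\}$ of uncorrelated umbrae similar to $\K_{\gamma,\alpha}$, the uncorrelation property gives $E[(\K')^{\mu_1} (\K'')^{\mu_2} \cdots (\K^{(\ell(\mu))})^{\mu_{\ell(\mu)}}] = k_{\mu_1} k_{\mu_2} \cdots k_{\mu_{\ell(\mu)}}$; summing over $\mu$ yields $E[P_n(\gamma;\K',\dots,\K''')] = \tilde P_n(\gamma;k_1,\dots,k_n) = a_n$, i.e.\ $\alpha^n \simeq P_n(\gamma;\K',\dots,\K''')$. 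The only nontrivial ingredient is the identity $\mathrm{d}_{(n)} (-g_n)_0 = 1$ that keeps the inversion inside $R$; no arithmetic condition on the $g_n$ beyond this is used, so the positivity hypothesis plays no role beyond guaranteeing that $g_n \punt \alpha$ in Definition \ref{cu} is literally the standard iterated umbral sum.
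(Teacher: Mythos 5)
Your proof is correct and follows essentially the same route as the paper's: both exploit the triangularity $k_n = a_n + (\text{polynomial in } a_1,\ldots,a_{n-1})$ coming from (\ref{Q}), invert recursively, and then umbralize by distributing the powers of the $k_i$'s over distinct uncorrelated umbrae via the uncorrelation property. The only cosmetic difference is that you establish homogeneity of $P_n$ by a weight-grading argument carried through the inversion (together with the observation that the leading coefficient $\mathrm{d}_{(n)}(-g_n)_0$ equals $1$), whereas the paper deduces it from the homogeneity property of Theorem~\ref{hp}; both are valid.
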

\begin{proof}
Suppose to denote by $c_n$ the $n$-th moment of $\K_{\gamma,\alpha}.$
From (\ref{Q}), $c_n =  a^n + q(a_1,a_2,\ldots,a_{n-1})$ where $q$ is a suitable
polynomial in $a_1,a_2,\ldots,a_{n-1}.$  So $a_n$ can be expressed in terms of
$c_1, \ldots, c_n$ by recursions. By replacing occurrences
of product of powers of the $c_i$'s by suitable products of powers of the $x_i$'s, the
polynomials $P_n$ such that $a_n=E[P_n(\gamma;\K',\K'',\ldots,\K''')]$ can be
constructed from these expressions. Finally, from the homogeneity property~\ref{hp}, we have
$P_{n}(\gamma;j\K',j\K'',\ldots,j\K''') \simeq j^n \alpha^n$, which assures the homogeneity of the $P_n$'s.
\end{proof}
Each sequence of cumulants linearizes a certain convolution of umbrae (i.e. of moments) and this is
why we call the elements of the matrix $C({\bf a})$ linear cumulants. More precisely, we define the \textit{convolution} of two umbrae $\alpha$ and $\eta$ induced by the umbra $\gamma$ to be the auxiliary umbra $\alpha +_{\kcon} \eta$ such that
\begin{equation}
\label{add}
\K_{\gamma,\alpha +_{\kcon}\omega}\equiv \K_{\gamma,\alpha}\stackrel{\punt}{+} \K_{\gamma,\omega},\,\,\,\,\,\,\,\,\,\text{(Additivity property).}
\end{equation}
In particular, convolutions are commutative. The invertibility property~\ref{In} assures the existence of the convolution of any pair of umbrae induced by any
umbra whose moments are positive integers.
\begin{thm}
For all scalar umbrae $\gamma$ whose moments $\{g_n\}_{n \geq 1}$ are positive integers, there
exists a sequence of polynomials $T_n(\gamma;x_1,\ldots,x_n,
y_1,\ldots,y_n)$  homogeneous of degree $n$ for all $n$, such that
for all $n$ and for all scalar umbrae $\alpha, \omega \in A$ we have
$(\alpha +_{\kcon} \omega)^n\simeq T_n(\gamma;\alpha',\alpha'',\ldots,\alpha''',\omega',\omega'',\ldots,\omega''').$
\end{thm}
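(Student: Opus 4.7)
The plan is to combine the Invertibility Property (Theorem~\ref{In}) with the additivity identity (\ref{add}) and the polynomial expression (\ref{Q}) for cumulants in terms of moments. The composition is polynomial at every stage, so what drops out is automatically a polynomial in the moments of $\alpha$ and $\omega$, and this polynomial lifts to the desired umbral polynomial $T_n$ evaluated at two families of uncorrelated similar umbrae.

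First I would invoke Theorem~\ref{In} on the umbra $\alpha +_{\kcon} \omega$ to write
$$(\alpha +_{\kcon} \omega)^n \simeq P_n(\gamma; \K_1, \ldots, \K_n),$$
where $\K_1,\ldots,\K_n$ are uncorrelated umbrae similar to $\K_{\gamma,\alpha +_{\kcon} \omega}$. By (\ref{add}) this cumulant umbra is similar to $\K_{\gamma,\alpha} \stackrel{\punt}{+} \K_{\gamma,\omega}$, so for each exponent $j\le n$ one has $E[\K_i^j]=E[\K_{\gamma,\alpha}^j]+E[\K_{\gamma,\omega}^j]$. Expanding $P_n$ as a sum of monomials $c_a\prod_i \K_i^{a_i}$, the uncorrelation of the $\K_i$'s factors each expectation as
$$E\Bigl[\prod_i \K_i^{a_i}\Bigr]=\prod_i E[\K_i^{a_i}]=\prod_i\bigl(E[\K_{\gamma,\alpha}^{a_i}]+E[\K_{\gamma,\omega}^{a_i}]\bigr).$$
Finally, by (\ref{Q}), each $E[\K_{\gamma,\alpha}^{a_i}]$ is a polynomial in the first $a_i$ moments of $\alpha$, and similarly for $\omega$. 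Collecting the resulting polynomial and reinserting uncorrelated similar umbral copies $\alpha',\ldots,\alpha''',\omega',\ldots,\omega'''$ in place of those moments yields the sought umbral equivalence, and reads off $T_n(\gamma;x_1,\ldots,x_n,y_1,\ldots,y_n)$.

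Homogeneity of $T_n$ of degree $n$ is inherited at each substitution: $P_n$ is homogeneous of degree $n$ in its $\K$-arguments and each $Q_{a_i}$ is homogeneous of degree $a_i$ in its $\alpha$-arguments, so a monomial $\prod_i \K_i^{a_i}$ with $\sum_i a_i=n$ is replaced by an expression of total degree $\sum_i a_i=n$ in the $x$- and $y$-variables. The main technical obstacle sits in the middle step: the disjoint-sum identity $(\xi\stackrel{\punt}{+}\eta)^j\simeq \xi^j+\eta^j$ is only an umbral equivalence and does not distribute over products, so it has to be applied factor by factor, leaning on the uncorrelation of the $\K_i$'s before each factor $\K_i^{a_i}$ is rewritten via its $Q_{a_i}$-expression. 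Once this bookkeeping is carried out, the construction is mechanical and closely parallels the proof of Theorem~\ref{In}.
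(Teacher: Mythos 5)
Your argument is correct and follows essentially the same route as the paper's: apply Theorem~\ref{In} to $\alpha +_{\kcon}\omega$, use the additivity property (\ref{add}) together with uncorrelation to split each factor $\K_i^{a_i}$ of a monomial of $P_n$ into $E[\K_{\gamma,\alpha}^{a_i}]+E[\K_{\gamma,\omega}^{a_i}]$ (the paper phrases this as replacing $x_i^{\lambda_i}$ by $x_i^{\lambda_i}+y_i^{\lambda_i}$), and read off $T_n$. You go one step further than the paper's one-line proof by explicitly composing with the polynomials of (\ref{Q}) so that $T_n$ ends up evaluated at copies of $\alpha$ and $\omega$ rather than at their $\gamma$-cumulant umbrae, which is what the statement actually asserts; this added bookkeeping (and the degree count showing homogeneity is preserved) is a welcome clarification rather than a departure.
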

\begin{proof}
Due to the invertibility property~\ref{In}, there exists $P_n(\gamma;x_1,x_2,\ldots,x_n)$ such that
$(\alpha +_{\kcon} \omega)^n \simeq P_n(\gamma;\K',\K'',\ldots,\K''').$ Then, suppose to replace
each occurrence of $x_i^{\lambda_i}$ in $P_n$ with $x_i^{\lambda_i}+y_i^{\lambda_i}$ and denote
by $T_n$ the polynomial resulting of this replacement. By virtue of the additivity property~(\ref{add}), it is straightforward to prove that $T_n$ satisfies all the properties of the theorem.
\end{proof}
In general, the cumulant umbrae $\K_{\scriptscriptstyle \gamma,\alpha}$'s do not have the semi-invariance property. This is due to the fact that $\K_{\gamma,u}$ is not similar to $\chi$, so that $\K_{\scriptscriptstyle \gamma,\alpha+_{\kcon}c u}$ is not similar to $\K_{\scriptscriptstyle \gamma,\alpha}\stackrel{\punt}{+}c\chi.$ However, after a suitable normalization of cumulants, moments and convolutions it is possible to recover the semi-invariance property. More explicitly, for the first column (classical cumulants) no normalization is needed. For the second column the right normalization (which returns boolean cumulants) is obtained via the moments $n!$ of the boolean unity $\bar{u}$. Indeed, $\{\K_{\scriptscriptstyle 2\punt u,\alpha}^n/n!\}_{n\geq 1}$ is a sequence of cumulants for the moments $\{\alpha^n/n!\}_{n\geq 1}$ which is semi-invariant with respect to the convolution $\{(\alpha+_{\scriptscriptstyle(2\punt u)}\omega)^n/n!\}_{n\geq 1}$. For the main diagonal (free cumulants) it is again $\bar{u}$ giving a good normalization. More generally, for columns and diagonals the normalization is always possible and it is obtained via umbrae representing positive integer moments.
\end{document}